\numberwithin{equation}{section}
\newtheorem{theorem}{Theorem}[section]
\newtheorem{lemma}[theorem]{Lemma}
\newtheorem{proposition}[theorem]{Proposition}
\newtheorem*{theorem*}{Theorem}
\theoremstyle{definition}
\newtheorem{definition}[theorem]{Definition}
\newtheorem*{TP0}{The Trace Problem}
\theoremstyle{remark}
\newtheorem{claim}[theorem]{Claim}
\DeclareMathOperator{\LC}{LC}
\newcommand{\bbC}{\mathbb{C}}
\newcommand{\bbF}{\mathbb{F}}
\newcommand{\cK}{\mathcal{K}}
\newcommand{\cM}{\mathcal{M}}
\newcommand{\bbN}{\mathbb{N}}
\newcommand{\cN}{\mathcal{N}}
\newcommand{\cP}{\mathcal{P}}
\newcommand{\cR}{\mathcal{R}}
\newcommand{\cS}{\mathcal{S}}
\newcommand{\bbT}{\mathbb{T}}
\newcommand{\cU}{\mathcal{U}}
\newcommand{\cZ}{\mathcal{Z}}
\newcommand{\wstar}{$W^\ast$}
\newcommand{\e}{\varepsilon}
\renewcommand{\phi}{\varphi}
\title{Continuous Selection of Unitaries in II$_1$ Factors}
\date{}
\thanks{I. F.  was partially supported by NSERC. A. V.  was supported by the Deutsche Forschungsgemeinschaft (DFG, German Research Foundation) under Germany’s Excellence Strategy EXC 2044--390685587, Mathematics Münster: Dynamics--Geometry--Structure, through SFB 1442 and ERC Advanced Grant 834267--AMAREC}
\author{Ilijas Farah}
\address{Ilijas Farah, Department of Mathematics and Statistics, York University, Toronto, Ontario M3J 1P3, Canada and Matemati\v{c}ki Institut SANU, 
	Kneza Mihaila 36,
	11000 Beograd, P. P. 367, Serbia}
\email{ifarah@yorku.ca}
\urladdr{https://ifarah.mathstats.yorku.ca}
\author{Andrea Vaccaro}
\address{Andrea Vaccaro, Mathematisches Institut, Fachbereich Mathematik und Informatik der
Universit\"at M\"unster, Einsteinstrasse 62, 48149 M\"unster, Germany.}
\email{avaccaro@uni-muenster.de}
\urladdr{https://sites.google.com/view/avaccaro}
\begin{document}
\maketitle

\begin{abstract}
We prove continuous-valued analogues of the basic fact that Murray--von Neumann subequivalence of projections in II$_1$ factors is completely determined by tracial evaluations. We moreover use this result to solve the so-called \emph{trace problem} in the case of factorial trivial $W^\ast$-bundles whose base space has covering dimension at most 1. Our arguments are based on applications to von Neumann algebras of a continuous selection theorem due to Michael.
\end{abstract}

\section{Introduction}

It is a basic fact that if $\cM$ is a II$_1$ factor with trace $\tau$ and $p, q \in \cM$ are projections, then $p$ is \emph{subequivalent} to $q$---i.e. there exists $v \in \cM$ such that $v^*v = p$ and $vv^* \le q$---if and only if $\tau(p) \le \tau(q)$.  In this note we consider a continuous-valued analogue of this property: take the 2-norm
\[
\| a \|_{2, \tau} \coloneqq \tau(a^*a)^{1/2}, \quad a \in \cM,
\]
and suppose that $(p(t))_{t \in [0,1]}$ and $(q(t))_{t \in [0,1]}$ are $\|\cdot \|_{2,\tau}$-continuous paths of projections in $\cM$, such that $\tau(p(t)) \le \tau(q(t))$ for all $t \in [0,1]$. Is there a $\|\cdot \|_{2,\tau}$-continuous function $v \colon [0,1] \to \cM$ such that $v(t)^*v(t) = p(t)$ and $v(t)v(t)^*\le q(t)$ for all $t \in [0,1]$?

We employ a continuous selection theorem due to Michael from \cite{Michael:selection2} to give a positive answer, covering in fact a larger class of cases. The maximum generality is obtained for II$_1$ factors of the form $\cN \bar \otimes L(\bbF_\infty)$ where $\cN$ is a finite factor and $L(\bbF_\infty)$ is the factor generated by the free group with infinitely many generators.

\begin{theorem}\label{thm:continuous_comparison}
Let $X$ be a compact Hausdorff space, let $(\cM, \tau)$ be a II$_1$ factor. Suppose that $p, q \colon X \to \cM$ are projection-valued $\|\cdot \|_{2,\tau}$-continuous functions and that one of the following conditions holds:
\begin{enumerate}[label=(\alph*)]
\item \label{item:cov1} $X$ has covering dimension at most 1.
\item \label{item:Finfty} $X$ has finite covering dimension, and $\cM \cong \cN \bar{\otimes} L(\bbF_\infty)$ for a finite factor $\cN$.
\end{enumerate}
Then the following two statements hold.
	\begin{enumerate}
\item \label{item:subequivalence} There exists a $\|\cdot \|_{2,\tau}$-continuous function $v \colon X \to \cM$ such that $v(x)^*v(x) = p(x)$ and $v(x)v(x)^* \le q(x)$ for all $x \in X$ if and only if $\tau(p(x)) \le \tau(q(x))$ for all $x \in X$.
\item \label{item:equivalence} There exists a unitary-valued $\|\cdot \|_{2,\tau}$-continuous function $v \colon X \to \cM$ such that $v(x) p(x) v(x)^* = q(x)$ for all $x \in X$ if and only if $\tau(p(x)) = \tau(q(x))$ for all $x \in X$.
\end{enumerate}
\end{theorem}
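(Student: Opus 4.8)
The two ``only if'' implications are immediate from the trace property: if $v$ is as in part~\ref{item:subequivalence}, then $\tau(p(x)) = \tau(v(x)^*v(x)) = \tau(v(x)v(x)^*) \le \tau(q(x))$, and if $v$ is as in part~\ref{item:equivalence}, then $\tau(q(x)) = \tau(v(x)p(x)v(x)^*) = \tau(p(x))$. I would also dispatch the reduction of the ``if'' direction of~\ref{item:equivalence} to that of~\ref{item:subequivalence} at the outset: assuming $\tau(p(x)) = \tau(q(x))$ for all $x$, apply~\ref{item:subequivalence} to the pair $(p,q)$ and, separately, to the pair $(1-p,1-q)$ (both satisfy the hypotheses, and equality of traces forces $vv^*$ to equal, not merely dominate, the target). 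This yields $\|\cdot\|_{2,\tau}$-continuous $v_1, v_2$ with $v_1^*v_1 = p$, $v_1v_1^* = q$, $v_2^*v_2 = 1-p$, $v_2v_2^* = 1-q$; then $v \coloneqq v_1 + v_2$ is a $\|\cdot\|_{2,\tau}$-continuous unitary conjugating $p$ to $q$. Thus the whole theorem reduces to the ``if'' direction of~\ref{item:subequivalence}.

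For that direction the plan is to realise the desired $v$ as a continuous selection of the set-valued map
\[
V(x) \coloneqq \{\, v \in \cM : v^*v = p(x),\ vv^* \le q(x)\,\}, \qquad x \in X,
\]
via Michael's finite-dimensional selection theorem from~\cite{Michael:selection2}. Applying that theorem requires three ingredients. First, each $V(x)$ must be a nonempty complete metric space: nonemptiness is exactly the pointwise Murray--von Neumann comparison theorem under $\tau(p(x)) \le \tau(q(x))$, while $\|\cdot\|_{2,\tau}$-closedness is routine and, since the fibres lie in the unit ball of $\cM$---which is $\|\cdot\|_{2,\tau}$-complete---yields completeness. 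Second, $V$ must be lower semicontinuous; I would verify this by a perturbation argument, showing that a given $v_0 \in V(x_0)$ can be nudged to some $v_x \in V(x)$ with $\|v_x - v_0\|_{2,\tau}$ small whenever $p(x), q(x)$ are $\|\cdot\|_{2,\tau}$-close to $p(x_0), q(x_0)$, using quantitative comparison of nearby projections. Third, one must control the connectedness of the fibres, and this is precisely where hypotheses~\ref{item:cov1} and~\ref{item:Finfty} diverge.

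The fibre $V(x)$ maps, via $v \mapsto vv^*$, onto the set of subprojections $r \le q(x)$ with $\tau(r) = \tau(p(x))$, with each point-preimage a torsor under the unitary group of the corner $r\cM r$ (again a II$_1$ factor). Michael's theorem with $\dim X \le n+1$ demands that each $V(x)$ be $C^n$ and that the family be equi-locally $n$-connected ($ELC^n$). In case~\ref{item:cov1} we have $n = 0$, so it suffices that the fibres be $\|\cdot\|_{2,\tau}$-path-connected and equi-locally path-connected; path-connectedness holds in every II$_1$ factor, since the unitary group of any corner is path-connected (every unitary is an exponential) and the relevant projection Grassmannian $\{\, r \le q(x) : \tau(r) = \tau(p(x))\,\}$ is path-connected, while the $ELC^0$ estimate reuses the perturbation bounds from the lower-semicontinuity step. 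In case~\ref{item:Finfty}, where $\dim X$ is finite but unbounded, we instead need $C^n$ and $ELC^n$ for every $n$, i.e. the fibres must be $\|\cdot\|_{2,\tau}$-weakly contractible; here I would invoke the special self-absorbing, infinitely-generated nature of $L(\bbF_\infty)$ inside $\cM \cong \cN \bar{\otimes} L(\bbF_\infty)$ to run a Kuiper-type swindle, contracting the unitary groups of corners---and hence the fibres---all the way up.

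The step I expect to be the main obstacle is this last one: not the pointwise vanishing of homotopy, but its \emph{uniform} ($ELC^n$) version across the family $\{V(x)\}_{x \in X}$. Establishing equi-local $n$-connectedness forces one to merge the quantitative $\|\cdot\|_{2,\tau}$-perturbation estimates underlying lower semicontinuity with the homotopy-contracting mechanism supplied by $L(\bbF_\infty)$, and making these two inputs cooperate uniformly over $X$ is the technical heart of case~\ref{item:Finfty}.
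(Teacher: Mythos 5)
Your ``only if'' directions are fine, and your algebraic reduction of part~\ref{item:equivalence} to part~\ref{item:subequivalence} (apply \ref{item:subequivalence} to $(p,q)$ and $(1-p,1-q)$, use faithfulness of $\tau$ to upgrade $vv^*\le q$ to $vv^*=q$, and add the two partial isometries) is correct. Your plan for case~\ref{item:cov1} can also be pushed through: the fibres $V(x)$ are nonempty, closed and complete, lower semicontinuity follows from a (somewhat laborious) perturbation argument for nearby projections of possibly different traces, and $0$-connectedness together with equi-$\LC^0$ can be extracted from the quantitative conjugacy of close equivalent projections (\cite[Lemma XIV.2.1]{Takesaki:III}) combined with short paths of unitaries as in Lemma~\ref{lemma:ulc}. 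Note, however, that your reduction runs in the \emph{opposite} direction to the paper's, and this choice is exactly what creates the problem below.

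The genuine gap is case~\ref{item:Finfty}, and you have flagged it yourself without resolving it. Your fibre $V(x)$ is \emph{not} a unitary group, nor a torsor under one: the map $v\mapsto vv^*$ sends $V(x)$ onto the sub-Grassmannian $\{r\le q(x):\tau(r)=\tau(p(x))\}$ with point-preimages that are $\cU(r\cM r)$-torsors, so contracting ``the unitary groups of corners'' via Theorem~\ref{thm:popatakesaki} contracts only the fibres of that map, not $V(x)$ itself. To conclude that $V(x)$ is $n$-connected---let alone that the family $\{V(x)\}_{x\in X}$ is equi-$\LC^n$ with uniform quantitative control---you would need a homotopy-lifting or local-cross-section argument for $v\mapsto vv^*$ (equivalently, for the quotient of $\cU(q(x)\cM q(x))$ by the stabilizer of a subprojection), and the existence of continuous cross-sections for such quotients in the $\|\cdot\|_{2,\tau}$-topology is precisely the delicate point where the original argument of \cite{PopaTakesaki:contractible} contained a gap, only recently repaired in \cite{Ozawa:contract}. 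The paper sidesteps this entirely by reversing your reduction: it proves \ref{item:equivalence} first, where the fibre $\Phi(x)=\{u\in\cU(\cM): up(x)u^*=q(x)\}$ is \emph{isometric} to the unitary group $\cU(p(x)\cM p(x)\oplus p(x)^\perp\cM p(x)^\perp)$, so the $e^{-s}$-Lipschitz contraction of Theorem~\ref{thm:popatakesaki} applies verbatim and yields $n$-connectedness and equi-$\LC^n$ simultaneously (using \cite{Radulescu:Finfty} to know the corners are again of the form $\cN\bar\otimes L(\bbF_\infty)$); it then deduces \ref{item:subequivalence} from \ref{item:equivalence} by a $2\times 2$ amplification, conjugating $p\oplus r$ to $q\otimes e_{11}$ inside $\cM\otimes M_2$, where $r(x)=\chi_{[0,\tau(q(x))-\tau(p(x))]}$ is a continuous projection-valued ``filler'' chosen in a unital copy of $L^\infty[0,1]\subseteq\cM$, and reading off the required partial isometry from a matrix entry of the selected unitary. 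So the missing idea is the direction of the reduction: select unitaries, never partial isometries, so that Michael's theorem (Theorem~\ref{thm:michael_selection}) is only ever applied to fibres shaped like unitary groups; without that (or without supplying the fibration argument you acknowledge you lack), your proof of case~\ref{item:Finfty} is incomplete.
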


Our interest in Theorem \ref{thm:continuous_comparison} was motivated by the so-called \emph{trace problem} for factorial tracially complete $C^\ast$-algebras, which we briefly report here.

Let $(\cM, X)$ be a pair where $\cM$ is a unital $C^\ast$-algebra and $X$ is a compact convex subset of the set $T(\cM)$ of all tracial states of $\cM$ (simply called \emph{traces} henceforth). The pair $(\cM, X)$ is a \emph{tracially complete $C^\ast$-algebra} if the 2-seminorm
\[
\| a \|_{2, X} \coloneqq \sup_{\tau \in X} \tau(a^*a)^{1/2}, \quad a \in \cM,
\]
is a norm on $\cM$, and if the unit ball $\cM_1$ (in the operator norm) is complete with respect to $\| \cdot \|_{2, X}$.\footnote{If $(\cM, X)$ is tracially complete, the requirement that $\cM$ is unital becomes redundant if $T(\cM)$ is assumed to be compact, as shown in \cite[Proposition 3.9]{CCEGSTW}.} A tracially complete $C^\ast$-algebra is \emph{factorial} if moreover $X$ is a face of $T(\cM)$. We refer to \cite{CCEGSTW} for a detailed introduction and study of these objects. The trace problem, in its most general form, asks the following.
\begin{TP0}[{\cite[Question 1.1]{CCEGSTW}}]
Let $(\cM, X)$ be a factorial tracially complete $C^\ast$-algebras. Is $X = T(\cM)$?
\end{TP0}
Note that every tracial von Neumann algebra $(\cM, \tau)$ is a tracially complete $C^\ast$-algebra (with $X = \{ \tau \}$), and it is factorial if and only if $\cM$ is a factor, in which case $T(\cM) = \{ \tau \}$.

In order to contextualize and motivate the trace problem, we briefly recall the notion of (uniform) tracial completion. The tracial completion of a $C^\ast$-algebra $A$ with compact non-empty trace space $T(A)$ is generated, as defined by Ozawa in \cite{Ozawa:dixmier}, by the completion of the closed unit ball $A_1$ with respect to the 2-seminorm
\[
\| a \|_{2, T(A)} \coloneqq \sup_{\tau \in T(A)} \tau(a^*a)^{1/2}, \quad a \in A.
\]

The $C^\ast$-algebra obtained from this process is denoted $\overline{A}^{T(A)}$. All traces in~$A$ have a unique $\| \cdot \|_{2, T(A)}$-continuous extension to $\overline{A}^{T(A)}$, which allows identification of $T(A)$ with a subset of $T(\overline{A}^{T(A)})$. Under this identification, the pair $(\overline{A}^{T(A)}, T(A))$ is a factorial tracially complete $C^\ast$-algebra (see \cite[\S 3.3]{CCEGSTW} for details). The trace problem is therefore asking whether $T(A) = T(\overline{A}^{T(A)})$ or, in other words, if $\overline{A}^{T(A)}$ is tracially complete with respect of its \emph{whole} trace space and if the tracial completion is an idempotent operation.

Going back to our Theorem \ref{thm:continuous_comparison}, a precedent is found in \cite[Theorem E]{CCEGSTW}, where the statement of Theorem \ref{thm:continuous_comparison} is proved for all compact Hausdorff spaces, with no assumption on the covering dimension, in case the II$_1$ factor $\cM$ has property $\Gamma$. This result was recently used by Evington in \cite{Evington:trace} to solve the so-called \emph{trace problem} in the case of tracial completions of $\cZ$-stable $C^\ast$-algebras and more generally for all factorial tracially complete $C^\ast$-algebras that satisfy \emph{uniform property $\Gamma$} (\cite[Definition 5.19]{CCEGSTW}). This represents the most general result so far, and indeed little is known beyond the class of tracially complete $C^\ast$-algebras with property $\Gamma$ (see \cite{Vaccaro:Wbundle} for some partial results on ultraproducts of $W^\ast$-bundles). 

In this paper, we use Theorem \ref{thm:continuous_comparison} to solve affirmatively the trace problem for a specific class of \emph{trivial $W^\ast$-bundles}, a subclass of the richer family of $W^\ast$-bundles introduced in \cite{Ozawa:dixmier}. Given a compact Hausdorff space $X$ and a tracial von Neumann algebra $(\cM, \tau)$, the \emph{trivial $W^\ast$-bundle} with base space $X$ and fiber $\cM$ is
\[
C_\sigma(X, \cM) \coloneqq \{ a \colon X \to \cM : a \text{ is bounded and $\|\cdot \|_{2,\tau}$-continuous} \}.
\]
Pointwise operations and the supremum norm endow this set with a $C^\ast$-algebra structure. Moreover, every Radon probability measure $\mu \in \textrm{Prob}(X)$ induces a tracial state $\rho_\mu$ on $C_\sigma(X, \cM)$ defined as
\begin{equation} \label{eq:measure}
\rho_\mu(a) = \int_X \tau(a(x)) \, d \mu(x), \quad a \in C_\sigma(X, \cM).
\end{equation}
These tracial states induce a 2-norm on $C_\sigma(X, \cM)$, defined as
\[
\| a \|_{2, X} \coloneqq \sup_{\mu \in \text{Prob}(X)}\rho_\mu(a^*a)^{1/2}, \quad a  \in C_\sigma(X, \cM).
\]
The algebra $C_\sigma(X, \cM)$ is tracially complete with respect this 2-norm, and it is factorial if and only if $\cM$, referred to as the \emph{fiber} of the bundle, is a factor (\cite[\S 3.6]{CCEGSTW}). In this case the trace problem translates into asking whether every trace on $C_\sigma(X, \cM)$ is equal to $\rho_\mu$ for some $\mu \in \textrm{Prob}(X)$.

After Evington's results \cite{Evington:trace}, perhaps the most elementary examples for which the trace problem remained unsolved were $C_\sigma([0,1], \cM)$ and $C_\sigma(\bbT, \cM)$, where $\cM$ is a II$_1$ factor that fails property $\Gamma$, such as the free group factors. Our theorem solves positively the trace problem for these and other cases.
\begin{theorem} \label{thm:trace_problem}
Let $(\cM, \tau)$ be a II$_1$ factor and let $X$ be a compact Hausdorff space with covering dimension at most 1.
Then the trace problem has positive solution for $C_\sigma(X, \cM)$, more precisely every tracial state $\rho \in T(C_\sigma(X, \cM))$ has the form
\[
\rho (a) = \int_X \tau(a(x)) \, d \mu(x), \quad a \in C_\sigma(X, \cM),
\]
for some Radon probability measure $\mu \in \textrm{Prob}(X)$.
\end{theorem}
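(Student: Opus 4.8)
The plan is to show that an arbitrary trace $\rho \in T(C_\sigma(X,\cM))$ factors through the \emph{center-valued trace}
\[
E\colon C_\sigma(X,\cM) \to C(X), \qquad E(a)(x) \coloneqq \tau(a(x)),
\]
which is well defined because $|\tau(a(x)) - \tau(a(y))| \le \|a(x)-a(y)\|_{2,\tau}$, and which lands in $C(X)\cdot 1 \cong Z(C_\sigma(X,\cM))$ (here I use that $\cM$ is a factor). Let $\mu \in \mathrm{Prob}(X)$ be the Radon measure representing the state $\rho|_{C(X)}$; then by \eqref{eq:measure} one has $\rho \circ E = \rho_\mu$, so the theorem is equivalent to the single assertion $\rho = \rho \circ E$, that is, $\ker E \subseteq \ker \rho$.

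Two elementary ingredients reduce this to a continuity statement. First, the subalgebra $C(X,\cM)$ of \emph{norm}-continuous sections is $\|\cdot\|_{2,X}$-dense in $C_\sigma(X,\cM)$ (approximate a section $a$ by $\sum_i \varphi_i\, a(x_i)$ for a partition of unity $\{\varphi_i\}$ subordinate to a finite cover on which $a$ varies little in $\|\cdot\|_{2,\tau}$). Second, since in a II$_1$ factor the norm-closed linear span of commutators equals $\ker\tau$, one gets $\ker E \cap C(X,\cM) = C(X,\ker\tau) \subseteq \overline{\mathrm{span}}^{\,\|\cdot\|}[C_\sigma(X,\cM),C_\sigma(X,\cM)] \subseteq \ker\rho$, the last inclusion holding because $\rho$ is a trace. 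Granting that $\rho$ is $\|\cdot\|_{2,X}$-continuous on bounded sets, any $c \in \ker E$ is approximated in $\|\cdot\|_{2,X}$ by $c_n \in C(X,\cM)$; replacing $c_n$ by $c_n - E(c_n) \in \ker E \cap C(X,\cM) \subseteq \ker\rho$ (which still converges to $c$, since $\|E(c_n)\|_{2,X} = \sup_x |\tau(c_n(x))| \le \|c_n - c\|_{2,X} \to 0$) and passing to the limit yields $\rho(c)=0$, as desired.

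Everything therefore comes down to showing that $\rho$ is $\|\cdot\|_{2,X}$-continuous on bounded sets, equivalently an estimate of the shape $\rho(b) \lesssim (\sup_x \tau(b(x)))^{1/2}$ for positive contractions $b$; this is where Theorem \ref{thm:continuous_comparison} enters. For a continuous projection $P$ set $s_0 \coloneqq \sup_x \tau(P(x))$, fix a $\|\cdot\|_{2,\tau}$-continuous nested family $(e(r))_{r\in[0,1]}$ with $\tau(e(r))=r$ (a spectral resolution of a diffuse self-adjoint element of $\cM$), and put $Q_i \coloneqq e(is_0)-e((i-1)s_0)$ for $1 \le i \le \lfloor 1/s_0\rfloor$. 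Theorem \ref{thm:continuous_comparison} gives that $P$ is subequivalent to $Q_1$ and that each $Q_i$ is equivalent to $Q_1$ inside $C_\sigma(X,\cM)$; as the $Q_i$ are pairwise orthogonal with $\sum_i Q_i \le 1$ and $\rho$ is tracial, this forces $\rho(P) \le \rho(Q_1) \le 1/\lfloor 1/s_0\rfloor \le 2\sup_x\tau(P(x))$. For a positive contraction $b$ with $s \coloneqq \sup_x\tau(b(x))$ and a cut level $t \in (0,1)$ one has $b \le t\cdot 1 + (b-t)_+$, whence $\rho(b) \le t + \rho(P_t) \le t + 2\sup_x \tau(P_t(x))$ for any continuous projection $P_t$ dominating the support of $(b-t)_+$. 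Since that support has fibrewise trace at most $s/t$ by Chebyshev's inequality, a $P_t$ with $\sup_x\tau(P_t(x)) \lesssim s/t$ gives $\rho(b) \lesssim t + s/t$, and optimizing at $t=\sqrt{s}$ produces the required bound.

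The hard part, and the place where the hypothesis $\dim X \le 1$ is genuinely used, is the construction of this dominating continuous projection $P_t$. The fibrewise support $x \mapsto \chi_{(t,\infty)}(b(x))$ of $(b-t)_+$ is only semicontinuous, so it cannot be used directly; what is needed is an honestly $\|\cdot\|_{2,\tau}$-continuous projection in $C_\sigma(X,\cM)$ that dominates this moving family of subspaces while keeping its fibrewise trace comparable to $s/t$. This is precisely a continuous-selection problem of the kind resolved by Michael's theorem, and it is exactly the mechanism underlying Theorem \ref{thm:continuous_comparison} that supplies it in covering dimension at most $1$. Without such control a trace need not be $\|\cdot\|_{2,X}$-continuous at all, which is what makes the trace problem nontrivial beyond the measure-induced traces $\rho_\mu$.
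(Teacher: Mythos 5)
Your overall strategy is correct in outline and genuinely different from the paper's. The paper proves that $C_\sigma(X,\cM)$ has strict comparison of positive elements relative to $X$ (via Theorem~\ref{thm:rr0}) and then invokes Evington's result (Proposition~\ref{prop:comparison}) as a black box; you instead factor an arbitrary trace $\rho$ through the center-valued trace $E$ and reduce everything to $\|\cdot\|_{2,X}$-continuity of $\rho$ on bounded sets. Several of your steps are sound and nicely executed: the reduction $\rho=\rho\circ E$ via $\ker E\subseteq\ker\rho$; the $\|\cdot\|_{2,X}$-density of norm-continuous sections and the commutator argument in the fibre (Fack--de la Harpe); and in particular the estimate $\rho(P)\le 2\sup_x\tau(P(x))$ for projections $P\in C_\sigma(X,\cM)$, obtained by using Theorem~\ref{thm:continuous_comparison}\eqref{item:subequivalence} to subordinate $P$ to constant projections cut from a diffuse spectral resolution. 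If completed, this would give a self-contained proof that does not route through Cuntz comparison or quasitraces at all.

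However, there is a genuine gap at exactly the step you yourself call ``the hard part'': the existence of a $\|\cdot\|_{2,\tau}$-continuous projection $P_t\in C_\sigma(X,\cM)$ dominating the fibrewise supports $\chi_{(t,1]}(b(x))$ with $\sup_x\tau(P_t(x))\lesssim s/t$. You assert that ``the mechanism underlying Theorem~\ref{thm:continuous_comparison}'' supplies this, but it does not: Theorem~\ref{thm:continuous_comparison} takes as \emph{input} two already given continuous projection-valued functions and compares them; it contains no mechanism for producing a continuous projection adapted to a positive element whose fibrewise spectral projections vary only semicontinuously. Producing such projections is precisely the content of the paper's Proposition~\ref{prop:projection}, and it requires a \emph{separate} application of Michael's selection theorem, with a different set-valued map, namely $\Phi(x)=\{q\in\mathrm{Proj}(\cM):\tau(q)=f(x),\ q\in s_x\cM s_x\}$ where $s_x$ is the support of $a(x)$, together with its own nontrivial verifications of lower semicontinuity (via Takesaki's perturbation lemmas for almost-commuting almost-projections) and of equi-$\LC^0$. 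Your $P_t$ can indeed be extracted from that proposition by a complementation trick---apply it to $1-h_{t/2,t}(b)$ with constant trace function $1-\min(1,2s/t)$ and take the complement of the resulting projection, exactly the maneuver used in the paper's proof of Theorem~\ref{thm:rr0}---but this is an additional selection argument, and it is where the hypothesis $\dim X\le 1$ does its second, essential piece of work. As written, your proposal correctly reduces the theorem to this construction but leaves the construction itself unproved, so the core difficulty of the theorem remains open in your argument.
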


The proof of Theorem \ref{thm:trace_problem} crucially relies on Theorem \ref{thm:continuous_comparison}. More precisely, from the perspective of $W^\ast$-bundles, Theorem \ref{thm:continuous_comparison} shows that $C_\sigma(X, \cM)$---for $X$ with covering dimension at most 1---has \emph{comparison of projections relative to $X$} in the following sense: in order to determine whether two projections $C_\sigma(X, \cM)$ are equivalent (or whether one is subequivalent to the other), it is sufficient to compare their tracial evaluations for traces in $\{ \rho_{\delta_x} \}_{x \in X}$, where $\delta_x$ is the Dirac measure corresponding to $x \in X$.

If $C_\sigma(X, \cM)$ had real rank zero this would be sufficient to deduce that $T(C_\sigma(X, \cM))$ is the closed convex hull of $\{ \rho_{\delta_x} \}_{x \in X}$, namely $\{ \rho_\mu \}_{\mu \in \text{Prob}(X)}$.  The question whether $C_\sigma(X,\cM)$ always has real rank zero is, to the best of our knowledge, currently open.   Instead,  we argue as in \cite{Evington:trace} and prove that hereditary subalgebras of $C_\sigma(X, \cM)$ contain sufficiently many projections to obtain \emph{comparison of positive contractions relative to $X$} (see Definition \ref{def:comparison}), which suffices to settle the trace problem.

\subsection*{Summary of the paper}
The paper is organized as follows. Section \ref{S.2} is devoted to preliminaries. In Section \ref{S.3} we prove Theorem \ref{thm:continuous_comparison}, while in Section~\ref{S.4} we prove Theorem \ref{thm:trace_problem}. Section~\ref{S.5} is reserved for concluding remarks.


\section{Preliminaries} \label{S.2}
\subsection{Tracial von Neumann algebras}
Given a $C^\ast$-algebra $A$, we denote its unitary group by $\cU(A)$,  the set of its positive elements by $A_+$ and the set of its projections by $\text{Proj}(A)$. Given a tracial von Neumann algebra $(\cM, \tau)$, we always interpret $\cU(\cM)$ as a topological group and $\text{Proj}(\cM)$ as a space with the the topology induced by the 2-norm
\[
\| a \|_{2, \tau} \coloneqq \tau(a^*a)^{1/2}, \, a \in \cM.
\]
On bounded sets, this coincides  with the strong topology  when $\cM$ is seen as an algebra of operators on $L^2(\cM, \tau)$. Given $a \in \cM$ and $\e > 0$, we use the notation $B_\e(a)$ to denote the $\| \cdot \|_{2, \tau}$-open ball centered in $a$ of radius $\e$.

We isolate the following elementary lemma for later use.
\begin{lemma} \label{lemma:ulc}
Let $(\cM, \tau)$ be a tracial von Neumann algebra, $\e > 0$ and $u,v \in \cU(\cM)$ such that $\| u - v \|_{2, \tau} < \e$. Then there exists a $\| \cdot \|$-continuous path $(w_t)_{t \in [0,1]}$ in $\cU(\cM)$ such that $w_0 = u$, $w_1 = v$ and $\| w_t - w_s \|_{2, \tau} < \e$ for all $s,t \in [0,1]$. 
\end{lemma}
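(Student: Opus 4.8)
The plan is to connect $u$ and $v$ through the one-parameter unitary group generated by a bounded self-adjoint logarithm of the unitary $z \coloneqq u^* v$. First I would record the standard fact that left multiplication by a unitary is a $\|\cdot\|_{2,\tau}$-isometry, so that $\|u - v\|_{2,\tau} = \|1 - z\|_{2,\tau} < \e$; this reduces the problem to building a suitable path from $1$ to $z$ in $\cU(\cM)$ and translating it by $u$ on the left.

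Next I would apply the \emph{Borel} functional calculus to $z$ to produce a self-adjoint $h \in \cM$ whose spectrum is contained in $(-\pi, \pi]$ (the principal argument) and which satisfies $e^{ih} = z$; in particular $\|h\| \le \pi$. I then set $w_t \coloneqq u e^{ith}$ for $t \in [0,1]$. Each $w_t$ is a unitary, $w_0 = u$ and $w_1 = u z = v$, and since $h$ is bounded the map $t \mapsto e^{ith}$ is continuous in operator norm, so $(w_t)_{t \in [0,1]}$ is a $\|\cdot\|$-continuous path in $\cU(\cM)$.

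The heart of the argument is the 2-norm estimate. Writing $w_t - w_s = u e^{ish}\bigl(e^{i(t-s)h} - 1\bigr)$ and using that $u e^{ish}$ is unitary, hence a $\|\cdot\|_{2,\tau}$-isometry, I would reduce to estimating $\|e^{i(t-s)h} - 1\|_{2,\tau}$. Let $\mu \coloneqq \tau \circ E$ be the (probability) spectral measure of $h$. Using $|e^{i\alpha} - 1|^2 = 4 \sin^2(\alpha/2)$, I would compute $\|e^{i(t-s)h} - 1\|_{2,\tau}^2 = \int 4 \sin^2\bigl((t-s)\theta/2\bigr)\, d\mu(\theta)$, while $\|1 - z\|_{2,\tau}^2 = \int 4\sin^2(\theta/2)\, d\mu(\theta)$. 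For $\theta \in (-\pi, \pi]$ and $|t - s| \le 1$, both $(t-s)\theta/2$ and $\theta/2$ lie in $[-\pi/2, \pi/2]$ with $|(t-s)\theta/2| \le |\theta/2|$, and $|\sin|$ is nondecreasing there; hence the first integrand is dominated pointwise by the second. Therefore $\|w_t - w_s\|_{2,\tau} \le \|1 - z\|_{2,\tau} < \e$, as required.

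The only delicate point I anticipate is the choice of logarithm: if $-1$ lies in the spectrum of $z$, the principal argument is discontinuous, so $h$ must be extracted through the Borel rather than the continuous functional calculus. This costs nothing here, since continuity of the path is only needed in the variable $t$---where it follows from boundedness of $h$---while the spectral estimate uses solely that $h$ is self-adjoint with $\sigma(h) \subseteq (-\pi, \pi]$. The genuine content is thus the elementary monotonicity observation that, along this geodesic-in-the-logarithm path, every pairwise distance is controlled by the distance between the endpoints.
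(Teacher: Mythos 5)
Your proposal is correct and takes essentially the same route as the paper's proof: reduce to comparing $z = u^*v$ with $1$, extract a bounded self-adjoint logarithm $h$ with $\|h\| \le \pi$ via the Borel functional calculus, use the norm-continuous path $t \mapsto u e^{ith}$, and control all pairwise distances by the monotonicity of $|e^{i\theta}-1|$ on $[-\pi,\pi]$. The only cosmetic difference is that the paper expresses the key estimate as a pointwise inequality $|e^{itx}-1|^2 \le |e^{ix}-1|^2$ promoted to an operator inequality by functional calculus, whereas you integrate the same inequality against the spectral measure $\tau \circ E$.
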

\begin{proof}
Fix $\e >0$ and let $u,v \in \cU(\cM)$ be such that $\| u - v \|_{2,\tau} < \e$. By replacing $u$ with $uv^*$, we may assume that $v =1$.
By Borel functional calculus there is a self-adjoint $a \in \cM$ such that $\| a \| \le \pi$ and $u = \exp(i a)$. The path $(w_t)_{t \in [0,1]}$, where $w_t \coloneqq \exp(ita)$,  is $\|\cdot\|$-continuous and as required. Indeed,
\[
| \exp(itx) - 1 |^2 \le | \exp(ix) - 1 |^2 \text{ for all } x \in [-\pi ,\pi],\, t \in [0,1],
\]
which implies $| w_t - w_s |^2 = | w_{t- s} - 1 |^2 \le | u -1 |^2$, and thus $\| w_t - w_s \|_{2, \tau} < \e$ for all $t,s \in [0,1]$.
\end{proof}

\subsection{Michael's continuous selection theorem}
In this subsection we briefly report all the necessary definitions and the statement of Michael's selection theorem from \cite{Michael:selection2} that will be needed to prove Theorems \ref{thm:continuous_comparison} and \ref{thm:trace_problem}.

Fix two topological spaces $X, Y$ and let $\cS$ be a subfamily of nonempty subsets of $Y$. A function $\Phi \colon X \to \cS$ is \emph{lower semicontinous} if for every open set $U \subseteq Y$ the set
\[
\{ x \in X : \Phi(x) \cap U \not = \emptyset \}
\]
is open in $X$.


In what follows let $S^n$ denote the $n$-dimensional sphere. A topological space $Y$ is \emph{$n$-connected} if for all $m \le n$ and every continuous map $f\colon S^m \to Y$ there is a continuous function $F \colon S^m \times [0,1] \to Y$ such that $F(z,0) = f(z)$ and $F(z,1) = F(w,1)$ for all $z,w \in S^m$.
Thus a space is $0$-connected if and only if it is path-connected and it is $1$-connected if and only if every loop is homotopic to a point. 

A family of subsets $\cS$ of $Y$ is \emph{equi-$\LC^n$} (\emph{equi-locally $n$-connected}) if for every $S_0 \in \cS$, every $u\in S_0$, and every open neighborhood $U$ of $u$ there is an open neighborhood $V$ of $u$ such that for all $S \in \cS$ and $m \le n$, every continuous map $f\colon S^m \to V \cap S$ extends to a continuous function $F \colon S^m \times [0,1] \to S \cap U$ such that $F(z,0) = f(z)$ and $F(z,1) = F(w,1)$ for all $z,w \in S^m$.

The main results of this paper follow from applications of a continuous selection principle due to Michael, of which we report a weaker version, sufficient for our needs.
\begin{theorem}[{\cite[Theorem 1.2]{Michael:selection2}}] \label{thm:michael_selection}
Let $X$ be a compact Hausdorff space such that $\dim(X) \le n+1$, and let $Y$ be a complete metric space. Suppose that $\cS$ is an equi-$\LC^n$ family of non-empty, $n$-connected, closed subsets of $Y$, and that $\Phi \colon X \to \cS$ is a lower semicontinous function. Then there exists a continuous function $F \colon X \to Y$ such that $F(x) \in \Phi(x)$ for all $x \in X$.
\end{theorem}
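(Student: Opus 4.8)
The plan is to obtain $F$ as the uniform limit of a sequence of continuous \emph{approximate selections}. Call a continuous map $f\colon X \to Y$ an \emph{$\e$-selection} if $\operatorname{dist}(f(x), \Phi(x)) < \e$ for every $x \in X$. I would construct continuous $\e_i$-selections $f_i$ with $\e_i \to 0$ and with $\sup_{x} d(f_i(x), f_{i+1}(x))$ summable; then $(f_i)$ is uniformly Cauchy, hence converges (by completeness of $Y$) to a continuous $F$, and since each $\Phi(x)$ is closed while $f_i(x)$ approaches $\Phi(x)$, one gets $F(x) \in \Phi(x)$. Thus the theorem reduces to two tasks: producing an $\e$-selection for every $\e > 0$, and \emph{improving} a given $\e$-selection to a much finer one without moving it too far in the sup-metric.

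Both tasks rest on the same combinatorial construction, which is where $\dim(X) \le n+1$ enters. Given a target $\e$, lower semicontinuity of $\Phi$ together with compactness of $X$ lets me choose finitely many points $y_j \in Y$ and open sets $W_j$ covering $X$ such that $\operatorname{dist}(y_j, \Phi(x)) < \e/2$ for every $x \in W_j$ (pick $y_j \in \Phi(x_j)$ and use lower semicontinuity with the open ball $B_{\e/2}(y_j)$). Because $\dim(X) \le n+1$, I may pass to a refinement admitting a subordinate partition of unity whose nerve $K$ is a simplicial complex of dimension at most $n+1$; the partition of unity yields a canonical map $X \to |K|$ into the geometric realization. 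I then define a continuous map $|K| \to Y$ by induction over the skeleta of $K$: send each vertex to the corresponding $y_j$, and extend successively over the $1$-, $2$-, \ldots, $(n+1)$-cells. Composing with $X \to |K|$ produces the desired $\e$-selection.

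The inductive extension over cells is the technical heart, and it is exactly what the two hypotheses on $\cS$ are designed to power. To extend an already-defined map from the boundary $S^{k-1}$ of a $k$-cell (for $k \le n+1$) across that cell, I must contract an $S^{k-1}$-image inside an appropriate member of $\cS$; the $n$-connectedness of the sets in $\cS$ guarantees such a filling exists for all $k-1 \le n$. To keep the filled-in values close to $\Phi$ and, in the improvement step, close to the previous selection, I invoke the equi-$\LC^n$ property: given a small target neighborhood $U$ into which I want the $k$-cell to map, equi-$\LC^n$ furnishes a smaller neighborhood $V$ such that any map of $S^{k-1}$ into $V \cap S$ fills into $S \cap U$, uniformly over $S \in \cS$. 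Running the skeletal induction with these paired neighborhoods $V \subseteq U$, chosen successively smaller as the dimension climbs from $0$ to $n+1$, confines the whole construction to controlled neighborhoods of the chosen values $y_j$, hence to within $\e$ of $\Phi$.

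For the improvement step one repeats the construction but anchored to an existing $\e_i$-selection $f_i$: the local values $y_j$ are taken near both $\Phi$ and $f_i$ on $W_j$, so the new selection is simultaneously $\e_{i+1}$-close to $\Phi$ and uniformly $O(\e_i)$-close to $f_i$. The main obstacle is the simultaneous quantitative bookkeeping across the two nested inductions---over the skeletal dimension $k = 0, \ldots, n+1$ and over the approximation index $i$---ensuring that the neighborhoods supplied by equi-$\LC^n$ can be chosen uniformly over all members of $\cS$ and small enough that both the ``$\e_{i+1}$-close to $\Phi$'' estimate and the summability of $\sup_{x} d(f_i(x), f_{i+1}(x))$ hold. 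Getting these constants to line up, while checking that the nerve map and the skeletal extensions patch together into genuinely continuous maps on $X$, is the delicate part; the $n$-connectedness is what makes the fillings exist and the closedness of the values is what forces the limit to land in $\Phi(x)$.
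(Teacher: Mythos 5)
This statement is not proved in the paper at all: it is imported verbatim as a citation of Michael's selection theorem (\cite[Theorem 1.2]{Michael:selection2}), so there is no internal proof to compare your attempt against. What you have written is, in outline, precisely Michael's own argument: successive approximation by continuous $\varepsilon$-selections, each built by mapping $X$ to the nerve of a suitable finite open cover (of dimension at most $n+1$, which is where $\dim(X)\le n+1$ enters) and extending over the skeleta, with $n$-connectedness supplying the fillings of spheres of uncontrolled size and the equi-$\LC^n$ hypothesis supplying the small, uniformly controlled fillings needed both to stay near $\Phi$ and to keep consecutive selections uniformly close; completeness of $Y$ and closedness of the values then yield the limit selection. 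Your division of labor between the two hypotheses is the correct one.

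As a self-contained proof, however, the proposal stops exactly where the real work begins, and you should be aware that the deferred ``bookkeeping'' is not routine: the equi-$\LC^n$ condition provides, for each point $u$ in a member of $\cS$ and each neighborhood $U$ of $u$, some smaller $V$ --- uniformly over the members $S \in \cS$ but \emph{not} uniformly over $u$ or over the scale of $U$. Converting this pointwise data into the uniform moduli needed to run the two nested inductions (over skeletal dimension and over the approximation index, with summable displacements) is the main technical content of Michael's paper, and it also forces an asymmetry you only hint at: the very first approximate selection must invoke full $n$-connectedness, since the chosen anchor points $y_j$ over adjacent cover elements need not be close, whereas the improvement steps can rely on equi-$\LC^n$ alone. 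So your text is an accurate roadmap of the standard proof rather than a proof; for the purposes of this paper, where the theorem is used as a black box, that is exactly the right level of understanding.
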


\subsection{Contractibility of unitary groups of II$_1$ factors}
We recall a result due to Popa and Takesaki from \cite{PopaTakesaki:contractible}, which will be essential to verify the hypotheses needed to invoke Theorem \ref{thm:michael_selection} in our arguments (see also the more recent \cite{Ozawa:contract}).

\begin{theorem}[{\cite[Corollary~2]{PopaTakesaki:contractible}}]
\label{thm:popatakesaki}
	For every II$_1$ factor $(\cM,\tau)$ such that either $\cM \cong \cM \bar\otimes \cR$ where $\cR$ is the hyperfinite II$_1$ factor, or  $\cM \cong \cN\bar\otimes L(\bbF_\infty)$ for some finite factor $\cN$, there exists a continuous map $\alpha\colon  [0,\infty) \times \cU(\cM)\to \cU(\cM)$ such that 
	\begin{enumerate}
		\item $\alpha_0(u)=u$ and $\lim_{s\to \infty} \alpha_s(u)=1$ for all $u \in \cU(\cM)$,
		\item $\alpha_s$ is an injective endomorphism for all $s \ge 0$,
		\item $\alpha_s\circ \alpha_t=\alpha_{s+t}$ for all $s,t\geq 0$, 
		\item \label{item4:et} $\|\alpha_s(u)-\alpha_s(v)\|_{2,\tau}<e^{-s} \|u-v\|_{2,\tau}$ for all $u,v \in \cU(\cM)$.
	\end{enumerate}
\end{theorem}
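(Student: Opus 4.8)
The plan is to reduce everything to the construction of a suitable continuous one-parameter semigroup of \emph{trace-scaling} injective $\ast$-endomorphisms of $\cM$, and then to obtain the maps $\alpha_s$ by an elementary algebraic correction. Concretely, suppose one has injective $\ast$-homomorphisms $\psi_s \colon \cM \to \cM$ with $\psi_0 = \mathrm{id}$, $\psi_s \circ \psi_t = \psi_{s+t}$, such that $q_s \coloneqq \psi_s(1)$ is a projection with $\tau(q_s) = e^{-2s}$, and $(s,x)\mapsto \psi_s(x)$ is jointly $\|\cdot\|_{2,\tau}$-continuous on bounded sets. I would then set
\[
\alpha_s(u) \coloneqq \psi_s(u) + (1 - q_s), \qquad u \in \cU(\cM).
\]
Since $\psi_s(u)$ is a unitary of the corner $q_s \cM q_s$ and $\psi_s(u)(1-q_s) = (1-q_s)\psi_s(u) = 0$, each $\alpha_s(u)$ is a unitary of $\cM$, and a direct computation gives $\alpha_s(uv) = \alpha_s(u)\alpha_s(v)$ and $\alpha_s(1)=1$, so $\alpha_s$ is a group endomorphism of $\cU(\cM)$, which is injective because $\psi_s$ is. The semigroup law $\alpha_s\circ\alpha_t = \alpha_{s+t}$ follows from $\psi_s\psi_t = \psi_{s+t}$ together with $\psi_s(q_t) = \psi_{s+t}(1) = q_{s+t}$, and $\alpha_0 = \mathrm{id}$ is immediate; this gives properties (2) and (3).

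The contraction estimate is where trace-scaling enters. Because $\cM$ is a factor, $\tau \circ \psi_s$ is a tracial functional with $\tau(\psi_s(1)) = \tau(q_s)$, so $\tau\circ\psi_s = \tau(q_s)\,\tau$ by uniqueness of the trace; therefore $\|\psi_s(x)\|_{2,\tau} = \tau(q_s)^{1/2}\|x\|_{2,\tau}$ for all $x$. Applying this to $x = v^*u - 1$ and using that left multiplication by a unitary is a $\|\cdot\|_{2,\tau}$-isometry yields
\[
\|\alpha_s(u) - \alpha_s(v)\|_{2,\tau} = \|\psi_s(v^*u-1)\|_{2,\tau} = \tau(q_s)^{1/2}\|u-v\|_{2,\tau} = e^{-s}\|u-v\|_{2,\tau},
\]
which is (4) up to a harmless reparametrisation of the scaling rate (or restriction to $s>0$) to make the inequality strict, and in particular forces $\|\alpha_s(u)-1\|_{2,\tau}\to 0$, giving the remaining half of (1).

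It remains to build the semigroup $(\psi_s)$, and this is the heart of the matter. The model to keep in mind is the isomorphism $\cR \cong M_2 \bar\otimes \cR$: compressing by a rank-one projection of $M_2$ gives an injective endomorphism scaling the trace by $1/2$, whose iterates produce a decreasing coherent family of corner projections. For the genuinely continuous construction I would pass to the II$_\infty$ amplification $\cM \bar\otimes B(\ell^2)$ and invoke the existence of a point-strongly continuous one-parameter group $(\Theta_s)_{s\in\bbR}$ of $\ast$-automorphisms scaling the canonical semifinite trace by $\mathrm{Tr}\circ\Theta_s = e^{-2s}\mathrm{Tr}$. Such a trace-scaling flow is available in both cases of the hypothesis: for $\cM\cong\cM\bar\otimes\cR$ it is $\mathrm{id}\bar\otimes\theta_s$ for the trace-scaling flow on the hyperfinite II$_\infty$ factor, and for $\cM\cong\cN\bar\otimes L(\bbF_\infty)$ one tensors the identity on $\cN$ with R\u{a}dulescu's trace-scaling automorphism group on the free-group II$_\infty$ factor. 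Fixing a projection $e$ of trace $1$, so that $e(\cM\bar\otimes B(\ell^2))e \cong \cM$, one then seeks to realise $\Theta_s$ as an endomorphism of this fixed corner.

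The main obstacle is exactly the \emph{coherence} needed to turn $(\Theta_s)$ into a jointly continuous semigroup of endomorphisms of $\cM$. Since $\Theta_s(e)$ has trace $e^{-2s}\le 1$ it is subequivalent to $e$, so there are partial isometries $w_s$ with $w_s^*w_s = \Theta_s(e)$ and $w_sw_s^*\le e$, and $\psi_s(x)\coloneqq w_s\Theta_s(x)w_s^*$ is an injective trace-scaling endomorphism of $\cM$; but the semigroup identity $\psi_s\psi_t = \psi_{s+t}$ forces a cocycle relation on $(w_s)$, equivalently a monotone choice $\Theta_s(e)\le e$ of corner projections along the positive orbit. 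Producing such a compatible, continuous family is the technical core of the theorem: it is here that tensorial self-absorption in the hyperfinite case (which lets one arrange the orbit of $e$ as a decreasing family with trivial meet, modelled on the discrete $M_2 \bar\otimes \cR$ picture) and the free interpolation machinery of Dykema and R\u{a}dulescu in the free case do the real work. Once the semigroup $(\psi_s)$ is in hand, the verifications of continuity and of properties (1)--(4) are routine by the reduction above.
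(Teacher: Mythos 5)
The paper offers no proof of this statement at all: it is quoted verbatim from Popa--Takesaki \cite[Corollary~2]{PopaTakesaki:contractible}, so the only ``proof'' in the paper is that citation, and your attempt has to be measured against the proof in that reference --- whose architecture you have in fact correctly reproduced. There, a general theorem (\cite[Theorem~1]{PopaTakesaki:contractible}) produces the map $\alpha$ for any separable II$_1$ factor whose amplification $\cM \bar\otimes B(\ell^2)$ carries a point-strongly continuous one-parameter trace-scaling automorphism group, and the corollary follows because such flows exist in the two listed cases (the hyperfinite II$_\infty$ flow in the McDuff case, R\u{a}dulescu's flow for $\cN \bar\otimes L(\bbF_\infty)$). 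Your reduction is sound: granting the semigroup $(\psi_s)$, the formula $\alpha_s(u) = \psi_s(u) + (1-q_s)$, the algebraic verifications of (2) and (3), the identity $\tau \circ \psi_s = \tau(q_s)\,\tau$ from uniqueness of the trace, and the resulting equality $\|\alpha_s(u)-\alpha_s(v)\|_{2,\tau} = e^{-s}\|u-v\|_{2,\tau}$ are all correct (and your remark about strictness in (4) is fair: as printed the strict inequality is vacuous at $s=0$ or $u=v$, so it must anyway be read as $\le$, possibly after reparametrising).

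The genuine gap is the step you label ``the heart of the matter'' and then do not carry out. Constructing $(\psi_s)$ --- equivalently a strongly continuous family of partial isometries $w_s$ with $w_s^*w_s = \Theta_s(e)$, $w_s w_s^* \le e$ and the cocycle identity $w_{s+t} = w_s \Theta_s(w_t)$, or a trace-one projection $e$ with $\Theta_s(e) \le e$ for all $s \ge 0$ --- is the entire mathematical content of \cite[Theorem~1]{PopaTakesaki:contractible}, and your proposal replaces it with an appeal to ``tensorial self-absorption'' and ``free interpolation machinery,'' neither of which, as stated, produces the coherent family: McDuffness yields a single trace-halving endomorphism $x \mapsto x \otimes e_{11}$ under $\cM \cong \cM \otimes M_2$, hence only a \emph{discrete} semigroup; R\u{a}dulescu's work yields the flow $\Theta_s$, which you have already assumed; and the naive completions fail --- a generic trace-one projection does not have a decreasing orbit, choosing $w_s$ separately for each $s$ (which is possible since $\mathrm{Tr}(\Theta_s(e)) \le \mathrm{Tr}(e)$) gives maps $\psi_s$ that need not compose to $\psi_{s+t}$, and straightening the path $s \mapsto \Theta_s(e)$ by small unitary conjugations gives continuity and even monotone ranges but not the cocycle identity that property (3) requires. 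So what you have is an accurate outline of the Popa--Takesaki strategy with its central construction left as an acknowledged black box; to turn it into a proof you must either execute that construction or simply cite \cite[Theorem~1]{PopaTakesaki:contractible}, which is in effect what the paper does.
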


\subsection{Strict comparison}
We record some definitions on strict comparison that will be needed in Section \ref{S.4}. We refer to \cite{ERS:cone, NgRobert:commutators} for a more detailed background of this material.

Let $A$ be a $C^\ast$-algebra and let $\cK$ be the $C^\ast$-algebra of compact operators on $\ell^2(\bbN)$. For $a, b \in (A\otimes \cK)_+$, we write  $a \precsim b$ when $a$ is \emph{Cuntz subequivalent} to $b$, i.e. if there is a sequence $(r_n)_{n=1}^\infty$ in $A \otimes \cK$ such that $\lim_{n \to \infty} r_n b r_n^\ast = a$. Recall that for projections $p,q$,
the relation $p \precsim q$ is equivalent to the existence of $v$ such that $vv^* = p$ and $v^* v \le q$.

We let $QT(A)$ denote the compact set of all lower-semicontinuous \emph{2-quasitraces} $\tau \colon A \to [0, \infty]$ (simply called \emph{quasitraces} from here on; see \cite[\S 4]{ERS:cone} for a precise definition and the topology on $QT(A)$) and recall that quasitraces on $A$ naturally extend to $A \otimes\cK$. We let $T(A)$ denote the set of all \emph{tracial states}, which we abbreviate as \emph{traces}, on $A$.

Given $a \in (A \otimes \cK)_+$ and $\tau \in QT(A)$, define the \emph{dimension function}
\[
d_\tau(a) \coloneqq \lim_{n \to \infty} \tau(a^{1/n}).
\]
It is a standard fact that, for $a,b \in (A \otimes \cK)_+$, $a \precsim b$ implies $d_\tau(a) \le d_\tau(b)$ for all $\tau \in QT(A)$. We say that $A$ has \emph{strict comparison} if $d_\tau(a) < d_\tau(b)$ for all $\tau \in QT(A)$ implies $a \precsim b$.

\begin{definition}[{\cite[Definition 3.1]{NgRobert:commutators}}] \label{def:comparison}
Let $A$ be a $C^\ast$-algebra and $X \subseteq \text{QT}(A)$ be a compact subset. Then $A$ has \emph{strict comparison relative to $X$} if for any $a, b \in (A \otimes \cK)_+$ we have that $a \precsim b$ whenever there is $\eta >0$ such that $d_\tau(a) \le (1 -\eta) d_\tau(b)$ for all $\tau \in X$.
\end{definition}

As noted by Evington in \cite{Evington:trace}, relative strict comparison is a useful notion to verify that certain sets of traces are sufficiently large. Below we state Evington's result restricted to the case of trivial $W^\ast$-bundles, which is the main focus of this note.

We say that a trivial $W^\ast$-bundle \emph{$C_\sigma(X, \cM)$ has comparison relatively to $X$} if it has comparison relatively to $\{ \rho_{\delta_x} \}_{x \in X}$, where $\rho_{\delta_x}$ is the trace on $C_\sigma(X, \cM)$ defined as in \eqref{eq:measure}, obtained from the Dirac measure corresponding to $x \in X$.

\begin{proposition}[{\cite[Proposition 4.2]{Evington:trace}}] \label{prop:comparison}
Let $(\cM, \tau)$ be a II$_1$ factor and let $X$ be a compact Hausdorff space. Suppose that the trivial $W^\ast$-bundle $C_\sigma(X, \cM)$ has strict comparison relative to $X$. Then every tracial state $\rho \in T(C_\sigma(X, \cM))$ has the form
\[
\rho (a) = \int_X \tau(a(x)) \, d \mu(x), \quad a \in C_\sigma(X, \cM),
\]
for some Radon probability measure $\mu \in \textrm{Prob}(X)$.
\end{proposition}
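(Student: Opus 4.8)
The plan is to fix an arbitrary trace $\rho\in T(C_\sigma(X,\cM))$, produce $\mu$ as the Riesz measure of the restriction of $\rho$ to the central copy of $C(X)$, and then prove $\rho=\rho_\mu$ by comparing dimension functions through the hypothesis of strict comparison relative to $X$. I would first record two reductions. For positive $a$ in a matrix amplification $C_\sigma(X,\cM)\otimes\cK$ (to which $\rho$ extends canonically), the rank--integral formula $\rho(a)=\int_0^\infty d_\rho((a-t)_+)\,dt$ reduces the identity $\rho=\rho_\mu$ to showing $d_\rho(b)=d_{\rho_\mu}(b)$ for the cut--downs $b=(a-t)_+$. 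On the other side, monotone convergence gives $d_{\rho_\mu}(b)=\int_X d_\tau(b(x))\,d\mu(x)=\int_X d_{\rho_{\delta_x}}(b)\,d\mu(x)$, so writing $g(x):=d_{\rho_{\delta_x}}(b)$ the whole statement reduces to the single estimate $d_\rho(b)=\int_X g\,d\mu$ for every positive $b$.

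The engine is a computation for \emph{constant} references. If $p\in\cM\otimes\cK$ is a projection of rational trace $w$ and $f\in C(X)_+$, then $\rho(f\cdot p)=w\int_X f\,d\mu$: decomposing $1_\cM$ into orthogonal projections of trace $w$ that are conjugate by unitaries of $\cM$—hence by elements $u\otimes 1_X$ commuting with the central $f$—and using that $\rho$ is a trace, all the resulting values coincide and sum to $\int_X f\,d\mu$. Letting $f$ run through $h^{1/n}$ for $h\in C(X)_+$ with open support $U$ yields $d_\rho(h\cdot p)=w\,\mu(U)$. Consequently a block--diagonal reference $c=\bigoplus_j h_j p_j$, built from orthogonal constant projections $p_j$ of trace $w_j$ and functions $h_j$ supported on $U_j$, carries two explicit data: $d_{\rho_{\delta_x}}(c)=\sum_j w_j\chi_{U_j}(x)$ and $d_\rho(c)=\sum_j w_j\mu(U_j)$.

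With these references in hand I would establish the two inequalities separately, the crucial device being to replace $g$, which is only lower semicontinuous, by continuous one-sided approximants. For the upper bound, the closed-cut rank function $G(x):=\tau(\chi_{[t,\infty)}(a(x)))$ is upper semicontinuous and dominates $g$, and $\int_X G\,d\mu=\int_X g\,d\mu$ for almost every $t$ (their difference integrates the eigenvalue-$t$ projection, which vanishes after integrating in $t$); hence one chooses a continuous $\phi\ge G\ge g$ with $\int_X\phi\,d\mu\le\int_X g\,d\mu+\varepsilon$. Because $\phi$ is continuous its superlevel sets $\{\phi>s_j\}$ are cozero, so a partition $0=s_0<\dots<s_m$ of mesh $<\varepsilon$, with $w_j=s_{j+1}-s_j$, is realized by an honest block--diagonal reference $c'$ with $d_{\rho_{\delta_x}}(c')=\sum_j w_j\chi_{\{\phi>s_j\}}(x)\ge g(x)$; adjoining a constant reference of small trace $\delta$ produces a uniform multiplicative gap, so Definition \ref{def:comparison} gives $b\precsim c'\oplus r_\delta$ and thus $d_\rho(b)\le\sum_j w_j\mu(\{\phi>s_j\})+\delta\le\int_X g\,d\mu+\varepsilon+\delta$. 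The lower bound is symmetric, approximating $g$ from below by a continuous $\psi\le g$ and scaling the reference ranks by a factor $1-\eta$ to secure the gap. Letting all parameters tend to $0$ yields $d_\rho(b)=\int_X g\,d\mu=d_{\rho_\mu}(b)$, and the rank--integral formula upgrades this to $\rho=\rho_\mu$.

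The main obstacle is exactly this localization over $X$: comparing $b$ with a single constant projection only yields the crude sandwich $\inf_x g\le d_\rho(b)\le\sup_x g$, and turning the pointwise rank function into the genuine integral $\int_X g\,d\mu$ is what forces the partition argument above. The two delicate points are (i) arranging a \emph{uniform multiplicative} gap so that Definition \ref{def:comparison} applies, handled by the additive perturbation $\delta$ and the multiplicative factor $1-\eta$; and (ii) the merely semicontinuous nature of the rank functions, which I circumvent by passing to the upper- and lower-semicontinuous spectral envelopes and approximating each from the correct side by a continuous function, whose superlevel sets are automatically cozero and hence realizable by elements of $C_\sigma(X,\cM)\otimes\cK$.
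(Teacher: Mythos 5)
Your proof is correct, but it takes a genuinely different route from the paper's. The paper disposes of this proposition in three lines of citation: it invokes \cite[Proposition 3.6]{CCEGSTW} to see that $(C_\sigma(X,\cM),\{\rho_\mu\}_{\mu\in\mathrm{Prob}(X)})$ is a factorial tracially complete $C^\ast$-algebra, observes that strict comparison relative to $\{\rho_{\delta_x}\}_{x\in X}$ implies strict comparison relative to the larger set $\{\rho_\mu\}_{\mu\in\mathrm{Prob}(X)}$ (enlarging the trace set makes the antecedent in Definition~\ref{def:comparison} harder to satisfy, so the implication goes the right way), and then quotes Evington's \cite[Proposition 4.2]{Evington:trace}---to which the proposition is attributed in the first place---as a black box. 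You instead re-prove the content of that proposition from scratch in the trivial-bundle case: $\mu$ is extracted from the restriction of $\rho$ to the central copy of $C(X)$ via Riesz representation, the identity $\rho=\rho_\mu$ is reduced to equality of dimension functions through the rank-integral formula, $d_\rho$ is computed exactly on block-diagonal references $\bigoplus_j h_j p_j$ (this is where traciality of $\rho$ against constant unitaries and centrality of $C(X)$ do the work), and the comparison hypothesis sandwiches $d_\rho(b)$ between Riemann sums of the fibrewise rank function. Your treatment of the two delicate points is sound: the uniform multiplicative gap (additive $\delta$ in one direction, $1-\eta$ rescaling in the other) and the one-sided continuous approximation of the semicontinuous rank envelopes, which is legitimate for Radon measures by outer/inner regularity. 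Two details you should make explicit: the gap argument needs $\sup_x d_\tau(b(x))<\infty$, which is automatic once you note that it suffices to run the whole argument for $a\in C_\sigma(X,\cM)_+$ itself, so that $g\le 1$ (only the reference elements need the stabilization); and since your ``engine'' is proved for rational traces, the partition points $s_j$ and the parameters $\delta,\eta$ should be chosen rational. As for what each approach buys: the paper's proof is short and leans on general machinery valid for all factorial tracially complete $C^\ast$-algebras, whereas yours is self-contained, exposes exactly where the algebraic structure of the trivial bundle enters, and makes the proposition independent of \cite{Evington:trace} and \cite{CCEGSTW}---at the cost of length and of redoing, in a special case, what the cited result already provides.
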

\begin{proof}
By \cite[Proposition 3.6]{CCEGSTW} the pair $(C_\sigma(X, \cM), \{ \rho_\mu \}_{\mu \in \text{Prob(X)}})$, where $\rho_\mu$ is defined as in \eqref{eq:measure}, is a factorial tracially complete $C^\ast$-algebra. If $C_\sigma(X, \cM)$ has strict comparison relative to $X$, it has strict comparison relative to $\{ \rho_\mu \}_{\mu \in \text{Prob(X)}}$, hence the conclusion follows by \cite[Proposition 4.2]{Evington:trace}.
\end{proof}

\section{Comparison of Projections} \label{S.3}
In this section we prove Theorem \ref{thm:continuous_comparison}.

\begin{proof}[Proof of Theorem \ref{thm:continuous_comparison}]
 We first prove \eqref{item:equivalence}. Suppose that $p, q \colon X \to \cM$ are projection-valued continuous functions such that $\tau(p(x)) = \tau(q(x))$ for all $x \in X$. Consider the map $\Phi \colon X \to \cP(\cU(\cM))$, where $\cP(\cU(\cM))$ denotes the power set of the unitary group $\cU(\cM)$, defined as
\[
\Phi(x) \coloneqq \{ u \in \cU(\cM) : u p(x) u^* = q(x) \}, \quad x \in X.
\]

Each $\Phi(x)$ is closed in $\cU(\cM)$, seen as complete metric space with the distance induced by $\| \cdot \|_{2,\tau}$, and non-empty since $\tau(p(x)) = \tau(q(x))$ and each fiber is a II$_1$-factor. For every $x \in X$, fix once and for all some $u_x \in \Phi(x)$. The map
\begin{align*}
\Phi(x) & \to  \cU(\cM ) \cap \{p(x) \}' \\
u &\mapsto u_x^\ast u
\end{align*}
is an isometry. After identifying the group $\cU(\cM ) \cap \{p(x) \}' $ of all unitaries that commute with $p$ with $\cU(p(x) \cM p(x) \oplus p(x)^\perp \cM p(x)^\perp)$, we conclude that
$\Phi(x)$ is isometrically homeomorphic to $\cU(p(x) \cM p(x) \oplus p(x)^\perp \cM p(x)^\perp)$,  for every $x \in X$.

\begin{claim} \label{claim:lsc}
The function $\Phi$ is lower semicontinuous.
\end{claim}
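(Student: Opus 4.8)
The plan is to verify lower semicontinuity directly from the definition, reducing it to a single perturbation statement about projections. Since $\cU(\cM)$ is a metric space for $\|\cdot\|_{2,\tau}$, it suffices to show that for each $x_0 \in X$, each $u_0 \in \Phi(x_0)$, and each $\e > 0$ there is a neighbourhood $W$ of $x_0$ with $\Phi(x) \cap B_\e(u_0) \neq \emptyset$ for all $x \in W$: openness of $\{x : \Phi(x) \cap U \neq \emptyset\}$ for an arbitrary open $U$ then follows by applying this to a ball $B_\e(u_0) \subseteq U$ with $u_0 \in \Phi(x_0) \cap U$. For $x$ near $x_0$ set $e \coloneqq u_0 p(x) u_0^*$ and $f \coloneqq q(x)$. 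Because conjugation by a unitary is $\|\cdot\|_{2,\tau}$-isometric and $p, q$ are $\|\cdot\|_{2,\tau}$-continuous, and since $u_0 p(x_0) u_0^* = q(x_0)$, the quantity $\|e - f\|_{2,\tau} \le \|p(x) - p(x_0)\|_{2,\tau} + \|q(x_0) - q(x)\|_{2,\tau}$ becomes arbitrarily small as $x \to x_0$, while $\tau(e) = \tau(p(x)) = \tau(q(x)) = \tau(f)$ by hypothesis. Hence if we can find $w \in \cU(\cM)$ with $w e w^* = f$ and $\|w - 1\|_{2,\tau}$ small, then $u \coloneqq w u_0 \in \Phi(x)$ satisfies $\|u - u_0\|_{2,\tau} = \|w - 1\|_{2,\tau}$, which is exactly what is needed.

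Everything thus reduces to the following perturbation lemma, which I regard as the crux: there is a universal constant $C$ such that whenever $e, f \in \Proj(\cM)$ satisfy $\tau(e) = \tau(f)$, there is $w \in \cU(\cM)$ with $w e w^* = f$ and $\|w - 1\|_{2,\tau} \le C \|e - f\|_{2,\tau}$. To prove it I would start from the standard intertwiner $a \coloneqq fe + (1-f)(1-e)$, which satisfies $fa = ae$ and, by a direct computation using $\tau(e) = \tau(f)$, obeys $\|a - 1\|_{2,\tau} \le \sqrt{2}\,\|e - f\|_{2,\tau}$. Taking the polar decomposition $a = w_0 |a|$ produces a partial isometry $w_0$ with $w_0 e = f w_0$ whose initial and final projections $s_e = w_0^* w_0$, $s_f = w_0 w_0^*$ commute with $e$ and $f$; since $a$ is close to $1$ in $\|\cdot\|_{2,\tau}$, the defect projections $1 - s_e$ and $1 - s_f$ have trace controlled by $\|e - f\|_{2,\tau}^2$, and $w_0$ already conjugates $e s_e$ onto $f s_f$.

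It remains to extend $w_0$ across the defects to an honest unitary, and here I would use factoriality of $\cM$. A computation identifies the corner projections exactly, namely $e(1-s_e) = e \wedge (1-f)$ and $(1-e)(1-s_e) = (1-e)\wedge f$, with the symmetric identities for $f$; the parallelogram law combined with $\tau(e) = \tau(f)$ gives $\tau(e \wedge (1-f)) = \tau((1-e)\wedge f)$, so that $e(1-s_e) \sim f(1-s_f)$ and $(1-e)(1-s_e) \sim (1-f)(1-s_f)$ in the II$_1$ factor $\cM$. Choosing partial isometries implementing these two Murray--von Neumann equivalences and summing them with $w_0$ yields a unitary $w$ with $w e w^* = f$, and since all correction terms are supported on projections of small trace while $|a|$ is close to $s_e$, one obtains $\|w - 1\|_{2,\tau} \le C\|e - f\|_{2,\tau}$. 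The main obstacle is exactly this lemma: the passage from $\|\cdot\|_{2,\tau}$-closeness (which, unlike operator-norm closeness, gives no invertibility of $a$) to a genuine conjugating unitary, together with the quantitative control of $\|w - 1\|_{2,\tau}$. Both force one to handle the non-overlapping parts of $e$ and $f$ explicitly and to exploit factoriality and the trace equality, rather than relying on a naive polar-decomposition argument.
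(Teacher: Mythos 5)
Your argument is correct and is essentially the same as the paper's: both reduce lower semicontinuity, via continuity of $p$ and $q$ and unitary invariance of $\| \cdot \|_{2,\tau}$, to the perturbation fact that two projections of equal trace in a II$_1$ factor which are $\| \cdot \|_{2,\tau}$-close can be conjugated onto each other by a unitary $\| \cdot \|_{2,\tau}$-close to $1$, with a uniform constant. The only difference is that the paper simply quotes this fact from \cite[Lemma XIV.2.1]{Takesaki:III} (with constant $2\sqrt{2}$), whereas you supply a correct self-contained proof of it via the intertwiner $fe+(1-f)(1-e)$, its polar decomposition, and the patching of the defect corners $e\wedge(1-f)$ and $(1-e)\wedge f$ using the parallelogram law and factoriality.
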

\begin{proof}
Let $U \subseteq \cU(\cM)$ be an open set, suppose that $x \in X$ is such that $\Phi(x) \cap U \not = \emptyset$. Let $u \in \Phi(x) \cap U$ and $\e > 0$ be such that $B_\e(u) \subseteq U$. Fix $0 < \delta < \frac{\e}{2 \sqrt{2}}$. By continuity of $p$ and $q$ there exists an open neighborhood $Z$ of $x$ in $X$ such that
\[
\| up(y)u^* - q(y) \|_{2, \tau} < \delta, \quad y \in Z.
\]

It is a well-known fact that projections in a tracial von Neumann algebra that are equivalent and $\| \cdot \|_{2,\tau}$-close can be conjugated by a unitary that is $\| \cdot \|_{2,\tau}$-close to the unit. More precisely, given $y \in Z$, by \cite[Lemma XIV.2.1]{Takesaki:III} there is $w_y \in \cU(\cM)$ such that $w_y up(y)u^* w_y^* = q(y)$ and
\[
\| 1 - w_y \|_{2, \tau} \le 2\sqrt{2} \| up(y)u^* - q(y) \|_{2, \tau} < \e.
\]
We conclude that $w_yu \in B_\e(u)$ and thus that $w_yu \in \Phi(y) \cap U$, for every $y \in Z$.
\end{proof}

The next claim is needed for the proof under assumption \ref{item:cov1}, that $X$ has covering dimension at most 1.

\begin{claim} \label{claim:equiLC0}
The space $\Phi(x)$ is 0-connected for every $x \in X$ and $\{ \Phi(x) \}_{x \in X}$ is equi-$\LC^0$.
\end{claim}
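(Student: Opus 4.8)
The plan is to deduce both assertions from the coset description of the fibers established above, with the metric control supplied by Lemma~\ref{lemma:ulc}. Throughout I would write $\cD_x := p(x)\cM p(x) \oplus p(x)^\perp\cM p(x)^\perp = \{p(x)\}' \cap \cM$, which is a tracial von Neumann algebra under $\tau|_{\cD_x}$, and record the coset identity $\Phi(x) = a\,\cU(\cD_x)$ valid for every $a \in \Phi(x)$: this follows since $w \in \cU(\cD_x)$ commutes with $p(x)$, so $(aw)p(x)(aw)^* = a p(x) a^* = q(x)$, while conversely $b \in \Phi(x)$ forces $a^*b \in \cU(\cD_x)$.

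For 0-connectedness I would argue that $\cU(\cD_x)$ is path-connected in $\|\cdot\|_{2,\tau}$: any $w \in \cU(\cD_x)$ is $\exp(ia)$ for some self-adjoint $a \in \cD_x$, and $t \mapsto \exp(ita)$ is a $\|\cdot\|$-continuous (hence $\|\cdot\|_{2,\tau}$-continuous) path from $1$ to $w$ within $\cU(\cD_x)$. Since left translation by any $a \in \Phi(x)$ carries $\cU(\cD_x)$ homeomorphically onto $\Phi(x)$, each fiber is path-connected, i.e.\ 0-connected.

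For equi-$\LC^0$, given $u \in \Phi(x_0)$ and an open neighborhood $U \ni u$ in $\cU(\cM)$, I would fix $\e > 0$ with $B_\e(u) \subseteq U$ and propose $V := B_\delta(u)$ with $\delta := \e/3$. Since $n = 0$, only $m = 0$ need be treated, so a map $f \colon S^0 \to V \cap \Phi(x)$ amounts to two points $a := f(1)$, $b := f(-1)$ in $B_\delta(u) \cap \Phi(x)$. The key computation is that $c := a^*b$ lies in $\cU(\cD_x)$ with
\[
\|c - 1\|_{2,\tau} = \|b - a\|_{2,\tau} \le \|b - u\|_{2,\tau} + \|u - a\|_{2,\tau} < 2\delta;
\]
applying Lemma~\ref{lemma:ulc} inside $(\cD_x, \tau|_{\cD_x})$ to the pair $c, 1$ then produces a $\|\cdot\|$-continuous path $(w_t)_{t}$ in $\cU(\cD_x)$ with $w_0 = c$, $w_1 = 1$, and $\|w_t - w_s\|_{2,\tau} < 2\delta$ throughout. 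The path $t \mapsto a w_t$ stays in $\Phi(x) = a\,\cU(\cD_x)$, runs from $b$ to $a$, and satisfies $\|a w_t - u\|_{2,\tau} < 2\delta + \delta = \e$, hence lies in $U \cap \Phi(x)$; setting $F(1,t) := a$ and $F(-1,t) := a w_t$ furnishes the desired extension.

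The step I expect to be the crux is the uniformity: the radius $\delta = \e/3$ must be chosen independently of $x$, since equi-$\LC^0$ demands a single $V$ working for all fibers simultaneously. What makes this possible---and is the real content of the argument---is that the coset structure reduces everything to joining $c = a^*b$ to the identity inside $\cU(\cD_x)$, where Lemma~\ref{lemma:ulc} yields a path whose $\|\cdot\|_{2,\tau}$-diameter is controlled purely by $\|c-1\|_{2,\tau}$, with no dependence on the fiber. Care is also needed to ensure the connecting homotopy remains inside $\Phi(x)$ rather than merely in $\cU(\cM)$, which is exactly why I work within the relative commutant $\cD_x$ from the outset.
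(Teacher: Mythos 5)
Your proposal is correct and follows essentially the same route as the paper: both exploit the coset identification $\Phi(x) = u\,\bigl(\cU(\cM)\cap\{p(x)\}'\bigr)$ together with Lemma~\ref{lemma:ulc} applied in the relative commutant $p(x)\cM p(x)\oplus p(x)^\perp\cM p(x)^\perp$ to produce short connecting paths, with the radius ($\e/3$ for you, $\e/4$ in the paper) chosen independently of the fiber, which is precisely the uniformity that equi-$\LC^0$ requires. The only differences are cosmetic: you apply Lemma~\ref{lemma:ulc} to the pair $(a^*b, 1)$ rather than to $(w_0,w_1)$, and you write out the exponential path for 0-connectedness where the paper simply cites path-connectedness of unitary groups of von Neumann algebras.
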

\begin{proof}
Being 0-connected means being path-connected. Thus every $\Phi(x)$ is 0-connected since it is isometrically homeomorphic to the unitary group of a von Neumann algebra.

To prove that $\{ \Phi(x) \}_{x \in X}$ is equi-$\LC^0$, it is sufficient to show that for every $\e > 0$, $x \in X$ and $u_0, u_1 \in \Phi(x)$ with $\| u_0 - u_1 \|_{2, \tau} < \e$, there is a continuous path $(u_t)_{t \in [0,1]}$ in $\Phi(x)$, of $\| \cdot \|_{2,\tau}$-diameter less than $\e$, from $u_0$ to $u_1$. Indeed, if this holds, given $x \in X$, $\e > 0$, $u \in \Phi(x)$ and $f \colon \{0,1\} \to \Phi(x) \cap B_{\e/4}(u)$ (note that $S^0$ is just a set with two isolated points) with image $u_0$ and $u_1$, then one can define $F \colon \{0,1 \} \times [0,1] \to \Phi(x) \cap B_\e(u)$ extending $f$ by setting $F(1, t) = u_1$ and $F(0,t) = u_t$ for every $t \in [0,1]$.

Let thus $w_0, w_1 \in \cU(\cM) \cap \{p(x)\}'$ be such that $u_i = u_x w_i$ for $i = 0,1$, so in particular $\| w_0 - w_1 \|_{2, \tau} < \e$. Since $ \cU(\cM) \cap \{p(x)\}'$ is isomorphic to $\cU(p(x) \cM p(x) \oplus p(x)^\perp \cM p(x)^\perp)$, by applying  Lemma \ref{lemma:ulc} to the latter,  there is a $\| \cdot \|$-continuous path $(w_t)_{t \in [0,1]}$ in $ \cU(\cM) \cap \{p(x)\}'$  from $w_0$ to $w_1$ such that $\| w_t - w_s \|_{2, \tau} < \e$ for all $s,t \in [0,1]$. The path $(u_t)_{t \in [0,1]}$, where $u_t \coloneqq u_x w_t$ for $t \in [0,1]$, is as desired.
\end{proof}

The following claim is used to prove Theorem~\ref{thm:continuous_comparison} under assumption \ref{item:Finfty}, that $X$ has finite covering dimension, and  $\cM \cong \cN \bar{\otimes} L(\bbF_\infty)$ for a finite factor~$\cN$.
\begin{claim} \label{claim:equiLCn}
Suppose that either {$\cM \cong L(\bbF_\infty)$} or $\cM \cong \cN \bar \otimes L(\bbF_\infty)$ for some II$_1$ factor $\cN$.
Then $\Phi(x)$ is $n$-connected for every $x \in X$ and $\{ \Phi(x) \}_{x \in X}$ is equi-$\LC^n$, for all $n \in \bbN$.
\end{claim}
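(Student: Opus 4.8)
The plan is to reduce both assertions to the contractibility of unitary groups supplied by Theorem~\ref{thm:popatakesaki}, applied not to $\cM$ itself but to the two corners $p(x)\cM p(x)$ and $p(x)^\perp\cM p(x)^\perp$. Fix $x$ and write $p=p(x)$. The first thing I would record is that these corners are again of Popa--Takesaki form. Since in a II$_1$ factor the isomorphism class of a compression $e\cM e$ depends only on $\tau(e)$ (conjugate by a partial isometry realizing the Murray--von Neumann equivalence), I may compute $p\cM p$ using the convenient projection $1\otimes e$ with $e\in L(\bbF_\infty)$ of trace $\tau(p)=\lambda$, which gives $p\cM p\cong \cN\bar\otimes(eL(\bbF_\infty)e)$; and $eL(\bbF_\infty)e\cong L(\bbF_\infty)$ because $L(\bbF_\infty)$ has fundamental group $\bbR_{>0}$ (Radulescu). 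Thus each nonzero corner is isomorphic to $\cN\bar\otimes L(\bbF_\infty)$ (respectively to $L(\bbF_\infty)$ when $\cM\cong L(\bbF_\infty)$), and the degenerate cases $p\in\{0,1\}$ reduce directly to $\cU(\cM)$. Theorem~\ref{thm:popatakesaki} therefore furnishes a contraction $\alpha$ on each corner, and I form the product map $\beta_s:=\alpha^{p\cM p}_s\times\alpha^{p^\perp\cM p^\perp}_s$ on $\cU(\cM)\cap\{p\}'\cong\cU(p\cM p)\times\cU(p^\perp\cM p^\perp)$. Since the $\|\cdot\|_{2,\tau}$-norm on the direct sum is the $\ell^2$-combination of the two corner norms and the renormalization factors $\sqrt{\tau(p)}$, $\sqrt{\tau(p^\perp)}$ cancel, a direct computation shows that $\beta$ again satisfies $\|\beta_s(w)-\beta_s(w')\|_{2,\tau}\le e^{-s}\|w-w'\|_{2,\tau}$, together with $\beta_0=\mathrm{id}$, $\beta_s(1)=1$, and $\beta_s(w)\to 1$ as $s\to\infty$.

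The $n$-connectedness is then immediate. Reparametrizing $[0,\infty)$ as $[0,1)$ and extending continuously at $1$ (the uniform convergence $\|\beta_s(w)-1\|_{2,\tau}\le e^{-s}\|w-1\|_{2,\tau}$ allows this), the map $\beta$ contracts $\cU(\cM)\cap\{p\}'$ to the point $1$. Hence this group, and therefore its isometric copy $\Phi(x)=u_x(\cU(\cM)\cap\{p\}')$, is contractible, so it is $n$-connected for every $n$.

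For the equi-$\LC^n$ property I would take $V:=B_{\e/4}(u)$ for $U=B_\e(u)$, a threshold that is uniform across all fibers. Given any $x$, any $m\le n$, and a continuous $f\colon S^m\to V\cap\Phi(x)$, fix a basepoint $z_0\in S^m$, set $w_0:=u_x^*f(z_0)\in\cU(\cM)\cap\{p\}'$, and define the recentred, reparametrized homotopy $\tilde F(z,t):=u_xw_0\,\beta_{\phi(t)}(w_0^*u_x^*f(z))$, where $\phi\colon[0,1]\to[0,\infty]$ is a fixed continuous increasing reparametrization with $\phi(1)=\infty$. Then $\tilde F(z,0)=f(z)$, $\tilde F(z,1)=u_xw_0=f(z_0)$ is constant, and $\tilde F$ takes values in $\Phi(x)$. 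The diameter estimate is where the uniformity lives: since left-multiplication by unitaries is $\|\cdot\|_{2,\tau}$-isometric and $e^{-\phi(t)}\le 1$, the contractivity of $\beta$ yields $\|\tilde F(z,t)-f(z_0)\|_{2,\tau}\le\|f(z)-f(z_0)\|_{2,\tau}<\e/2$, whence $\|\tilde F(z,t)-u\|_{2,\tau}<\e$ for all $z,t$; thus $\tilde F$ maps into $U\cap\Phi(x)$. Because the threshold $\e/4$ depends only on $\e$ and not on $x$, this is exactly equi-$\LC^n$.

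The main obstacle I expect is twofold. First, verifying that the corners remain of Popa--Takesaki form rests on Radulescu's computation of the fundamental group of $L(\bbF_\infty)$ together with the trace-only dependence of compressions. Second, and more essentially, the contraction must be confined to a ball whose radius is controlled \emph{uniformly} over all fibers; the recentring trick $w\mapsto w_0\beta_s(w_0^*w)$ combined with the contractive constant $e^{-s}\le 1$ is precisely what makes the diameter bound independent of $x$, which is the force of the word ``equi''. Joint continuity of $\tilde F$ at $t=1$ also has to be checked, but this follows from the uniform convergence $\beta_s(w)\to 1$ provided by the same contractivity estimate.
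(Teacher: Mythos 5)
Your proposal is correct and takes essentially the same route as the paper's proof: identify the corners as being of Popa--Takesaki form via Radulescu's computation of the fundamental group, transport the contraction of Theorem~\ref{thm:popatakesaki} to $\cU(\cM)\cap\{p(x)\}'$, and apply the recentred homotopy $f(z_0)\,\alpha_s\bigl(f(z_0)^*f(z)\bigr)$, using the $e^{-s}$-contractivity to make the diameter bound independent of the fiber, which is exactly the ``equi'' part. Your write-up is in fact marginally more careful on two cosmetic points---you explicitly form the product contraction on the direct sum of the two corners rather than invoking the theorem for $\cM(x)$ directly, and your choice $V=B_{\e/4}(u)$ makes the final estimate close with room to spare, whereas the paper's $B_{\e/2}(u)$ requires reading its diameter claim charitably---but the underlying argument is the same.
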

\begin{proof}
By \cite[p. 519]{Radulescu:Finfty} the fundamental group of $\cM$ is full, thus the corners $p(x) \cM p(x)$ and $p(x)^\perp \cM p(x)^\perp$ are trivial or isomorphic to $\cN\bar\otimes L(\bbF_\infty)$ for some finite factor $\cN$. Abbreviate $p(x) \cM p(x) \oplus p(x)^\perp \cM p(x)^\perp$ with $\cM(x)$. We can thus apply Theorem~\ref{thm:popatakesaki} and fix, for every $x \in X$, a continuous map
\begin{equation} \label{eq:alpha}
\alpha^x \colon [0,\infty) \times \cU(\cM(x)) \to \cU(\cM(x))
\end{equation}
satisfying all the conditions therein. 

In order to show that  $\{ \Phi(x) \}_{x \in X}$ is equi-$\LC^n$ for all $n \in \bbN$, fix $n \in \bbN$, take some $x \in X$, some $u \in \Phi(x)$ and consider the open ball $B_\e(u)$. Fix $y\in X$ and suppose that  $f \colon S^n \to B_{\e/2}(u) \cap \Phi(y)$ is continuous. Fix some $z_0 \in S^n$ and consider the function
\begin{align*}
 f_0 \colon S^n &\to \cU(\cM) \cap \{p(y) \}'\\
  z &\mapsto f(z_0)^* f(z)
\end{align*}  
  By construction $f_0(z_0) = 1$ and the diameter of the image of $f_0$ is smaller
than $\e/2$. Define, after identifying $\cU(\cM) \cap \{p(y) \}'$ with $\cU(\cM(y))$, and using $\alpha^y$ as defined in \eqref{eq:alpha}
\begin{align*}
F_0 \colon S^n \times [0, \infty) &\to \cU(\cM) \cap \{p(y) \}'\\
(z,t)  &\mapsto \alpha_t^y(f_0(z))
\end{align*}

By definition of $\alpha^y$, $F_0$ restricts to $f_0$ on $S^n \times \{0 \}$, and $F_0$ can be moreover continuously extended to $S^n \times [0,\infty]$ by setting $F_0(z, \infty)= 1$. Note finally that, by the construction in  \cite[Theorem 1]{PopaTakesaki:contractible}, the function $\alpha^y$ is such that $\alpha^y_t(1) = 1$ for all $t \in [0, \infty)$. For $z \in S^n$ and $t > 0$ we therefore get, using item \eqref{item4:et} of Theorem~\ref{thm:popatakesaki} at the second step, that
\[
\| F_0( z, t) - 1 \|_{2, \tau} = \| \alpha_t^y(f_0(z)) - \alpha_t^y(1) \|_{2, \tau} \le e^{-t} \|  f_0(z) - 1 \|_{2, \tau} < e^{-t} \cdot \e/2.
\]
The function $F \coloneqq f(z_0) F_0$ is thus a continuous extension of $f$ to $S^n \times [0, \infty]$ whose range is contained in $B_\e(u) \cap \Phi(y)$.
\end{proof}

By Claims \ref{claim:lsc}, \ref{claim:equiLC0} and \ref{claim:equiLCn}, we can apply Theorem \ref{thm:continuous_comparison} under either of the assumptions \ref{item:cov1} or \ref{item:Finfty}. It then follows that there is a continuous function $v \colon X \to \cU(\cM)$ such that $v(x) p(x) v(x)^* = q(x)$, for all $x \in X$.
This completes the proof of \eqref{item:equivalence}. 

Next, we prove \eqref{item:subequivalence} from the statement of the theorem. Assume that $p, q \colon X \to \cM$ are projection-valued $\|\cdot \|_{2,\tau}$-continuous functions such that 
$\tau(p(x)) \le \tau(q(x))$ for all $x \in X$.
Let $(e_{ij})_{i,j=0,1}$ be a system of matrix units for the algebra $M_2$ of $2\times2$ matrices, and let $\text{tr}_2$ be the normalized trace on~$M_2$.  Consider the projection-valued continuous map
\begin{align*}
q \otimes e_{11} \colon X &\to \cM \otimes M_2 \\
x & \mapsto q(x) \otimes e_{11}
\end{align*}

Fix a unital copy of $L^\infty([0,1], \mu)$ in $\cM$, with $\mu$ being the measure on $X$ induced by the restriction of $\tau$ on $L^\infty([0,1], \mu)$, and consider the continuous function
\begin{align*}
r \colon X &\to L^\infty([0,1], \mu) \\
x &\mapsto \chi_{[0, \tau(q(x) - p(x))]}
\end{align*}
where $\chi_{[a,b]}$ denotes the characteristic function on the interval $[a,b]$. Define finally
\begin{align*}
p \oplus r \colon X &\to \cM \otimes M_2 \\
x &\mapsto \begin{pmatrix} p(x) & 0 \\ 0 & r(x) \end{pmatrix}
\end{align*}

This is a projection in the trivial \wstar-bundle. 
By construction we have $\tau \otimes \text{tr}_2(q(x) \otimes e_{11}) = \tau \otimes \text{tr}_2(p(x) \oplus r(x))$ for all $x \in X$. Apply item \eqref{item:equivalence} of the present theorem---note that $\cM \otimes M_2$ satisfies the assumption in item \ref{item:Finfty} if~$\cM$ does, since by \cite{Radulescu:Finfty} the fundamental group of $\cM$ is full and therefore $\cM \otimes M_2 \cong \cM$---to build a continuous map $u \colon X \to \cU(\cM \otimes M_2(\bbC))$ such that
\[
u(x) (p(x) \oplus r(x)) u(x)^* = q(x) \otimes e_{11},\quad x \in X.
\]
For every $i,j = 0,1$ there are continuous functions $u_{ij} \colon X \to \cM$ such that $u = (u_{ij})_{i,j=0,1}$. It follows that $v \colon X \to \cM$ defined as $v(x) := u_{00}(x) p(x)$ pointwise implements the subequivalence between $p$ and $q$.
\end{proof}

Theorem \ref{thm:popatakesaki} also applies to McDuff factors, and this can be used in our argument to prove Theorem \ref{thm:continuous_comparison} also in case $X$ has finite covering dimension and $\cM$ is McDuff. Note however that something stronger than this has already been obtained in \cite[Theorem E]{CCEGSTW}: using methods based on \emph{complemented partitions of unity}, the aforementioned result shows that the statement of Theorem \ref{thm:continuous_comparison} holds for all compact Hausdorff spaces $X$ (with no restriction on the covering dimension) if $\cM$ is McDuff and even if it has property $\Gamma$.

\section{The Trace Problem for Trivial $W^\ast$-bundles} \label{S.4}
This section is devoted to the proof of Theorem \ref{thm:trace_problem}. Our argument follows \cite{Evington:trace}. More precisely, our Proposition \ref{prop:projection} and Theorem \ref{thm:rr0} can be compared with \cite[Theorem 3.2, Theorem 3.7]{Evington:trace} respectively. We assume that the reader is familiar with the basics of the theory of Cuntz subequivalence (see \cite[\S 2--4]{Rordam:UHF} or \cite{Thiel:notes} for an introduction to this topic).

In the remaining part of this section, for positive elements $a$ and $b$  we abbreviate $ab = ba = a$ (meaning that $b$ is a unit for $a$) as $a \vartriangleleft b$. Finally, given $0 \le \e_0 < \e_1 \le1$, we let $h_{\e_0, \e_1} \colon [0,1] \to [0,1]$ be the continuous function which is constantly 0 on $[0, \e_0]$, constantly 1 on $[\e_1, 1]$, and linear on $[\e_0, \e_1]$.

\begin{proposition} \label{prop:projection}
Let $(\cM, \tau)$ be a II$_1$ factor and let $X$ be a compact Hausdorff space with covering dimension at most 1. Fix $a \in C_\sigma(X, \cM)_+$ and let $f \colon X \to [0,1]$ be a continuous function such that $f(x) \le d_{\tau}(a(x))$ for all $x \in X$. Then there exists a projection $p \in C_\sigma(X, \cM)$ such that
\begin{enumerate}
\item $\tau(p(x)) = f(x)$ for all $x \in X$,
\item every $b \in C_\sigma(X, \cM)_+$ that satisfies $a \vartriangleleft b$, also satisfies $p \vartriangleleft b$.
\end{enumerate}
\end{proposition}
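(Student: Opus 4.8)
The plan is to reduce the statement to a single application of Michael's selection theorem (Theorem~\ref{thm:michael_selection}) that produces an auxiliary continuous projection dominated by the support of $a$, and then to invoke Theorem~\ref{thm:continuous_comparison} to correct its trace. First I would record the pointwise reformulation of condition (2). If $a \vartriangleleft b$ with $a,b \ge 0$, then $ab = ba = a$ forces $a$ and $b$ to commute, so $a^n b = a^n$ for all $n$ and hence $g(a)b = g(a)$ for every continuous $g$ vanishing at $0$; passing to the strong limit of such $g(a)$ gives $\mathrm{supp}(a(x))\,b(x) = \mathrm{supp}(a(x))$ pointwise, where $\mathrm{supp}(a(x))$ denotes the support (range) projection. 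Writing $r_x := \mathrm{supp}(a(x))$, it follows that any projection $p \in C_\sigma(X,\cM)$ with $p(x) \le r_x$ for all $x$ automatically satisfies $p \vartriangleleft b$ whenever $a \vartriangleleft b$. Thus it suffices to build a \emph{continuous} projection $q \in C_\sigma(X,\cM)$ with $q(x) \le r_x$ and $\tau(q(x)) \ge f(x)$ for all $x$. Given such a $q$, I would fix a unital copy of $L^\infty([0,1])$ in $\cM$, set $\rho(x) := \chi_{[0,f(x)]}$ (a continuous projection in $C_\sigma(X,\cM)$ with $\tau(\rho(x)) = f(x) \le \tau(q(x))$), and apply Theorem~\ref{thm:continuous_comparison}\eqref{item:subequivalence} (available since $\dim X \le 1$) to obtain a continuous $v$ with $v^*v = \rho$ and $vv^* \le q$. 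Then $p := vv^*$ is a continuous projection with $\tau(p(x)) = f(x)$ and $p(x) \le q(x) \le r_x$, as required.

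To build $q$ I would apply Theorem~\ref{thm:michael_selection} with $n = 0$ to the set-valued map
\[
\Psi(x) := \{\, q_0 \in \mathrm{Proj}(\cM) : q_0 \le r_x,\ \tau(q_0) \ge f(x) \,\},
\]
with values in the complete metric space $(\mathrm{Proj}(\cM), \|\cdot\|_{2,\tau})$. Each $\Psi(x)$ is nonempty (it contains $r_x$, whose trace is $d_\tau(a(x)) \ge f(x)$) and $\|\cdot\|_{2,\tau}$-closed, since both $\tau(q_0) \ge f(x)$ and $q_0 r_x = q_0$ are closed conditions on projections. That each $\Psi(x)$ is $0$-connected and that $\{\Psi(x)\}_x$ is equi-$\mathrm{LC}^0$ I would check as in the proof of Claim~\ref{claim:equiLC0}: two nearby subprojections $q_0, q_1$ of $r_x$ of trace $\ge f(x)$ are joined inside $\Psi(x)$ by short increasing paths running through their join $q_0 \vee q_1$, which still lies below $r_x$ and has trace $\ge f(x)$, while path-connectedness follows by joining any element of $\Psi(x)$ up to $r_x$ through an increasing path of projections.

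The main obstacle is verifying that $\Psi$ is lower semicontinuous, and this is delicate precisely because $x \mapsto r_x = \mathrm{supp}(a(x))$ is \emph{not} $\|\cdot\|_{2,\tau}$-continuous; support projections reflect only the lower semicontinuity of $x \mapsto d_\tau(a(x))$ and may jump upward near a point. To circumvent this I would replace $r_x$ by the genuinely $\|\cdot\|_{2,\tau}$-continuous soft cut-offs $h_{\delta/2,\delta}(a(x))$. Given $q_0 \in \Psi(x_0)$ and $\e > 0$, since $h_{\delta/2,\delta}(a(x_0)) \to r_{x_0}$ strongly as $\delta \downarrow 0$ and $q_0 \le r_{x_0}$, for small fixed $\delta$ the spectral projection $q_0' := \chi_{[1/2,1]}\bigl(h_{\delta/2,\delta}(a(x_0))\, q_0\, h_{\delta/2,\delta}(a(x_0))\bigr)$ lies within $\e/2$ of $q_0$ and satisfies $q_0' \le \mathrm{supp}(h_{\delta/2,\delta}(a(x_0))) \le r_{x_0}$. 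Using continuity of $x \mapsto h_{\delta/2,\delta}(a(x))$, the analogous projection $q_x := \chi_{[1/2,1]}\bigl(h_{\delta/2,\delta}(a(x))\, q_0\, h_{\delta/2,\delta}(a(x))\bigr)$ is $\|\cdot\|_{2,\tau}$-close to $q_0$ for $x$ near $x_0$ and lies below $\mathrm{supp}(h_{\delta/2,\delta}(a(x))) \le r_x$.

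It remains to repair the trace constraint, the one subtlety hidden in the $\ge$ inequality: one has $\tau(q_x) \approx \tau(q_0) \ge f(x_0)$, but $f$ may increase, so $\tau(q_x)$ could fall just short of $f(x)$. Here I would use that $f$ is continuous with $f(x) \le d_\tau(a(x)) = \tau(r_x)$ and enlarge $q_x$ inside $r_x$ by a subprojection of trace $\max(0, f(x) - \tau(q_x))$, which is small by continuity of $f$; such a subprojection exists because $\cM$ is a II$_1$ factor and $\tau(r_x - q_x) \ge f(x) - \tau(q_x)$. The resulting projection belongs to $\Psi(x)$ and remains $\|\cdot\|_{2,\tau}$-close to $q_0$, establishing lower semicontinuity. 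With all hypotheses of Theorem~\ref{thm:michael_selection} verified, the continuous selection it provides is the desired $q$, and the reduction in the first paragraph then yields $p$.
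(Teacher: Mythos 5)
Your overall architecture is sound and genuinely different from the paper's: you run Michael's theorem on the \emph{relaxed} set-valued map $\Psi(x)=\{q_0\le r_x : \tau(q_0)\ge f(x)\}$ and only afterwards correct the trace, by comparing the selection $q$ against the continuous projection $x\mapsto\chi_{[0,f(x)]}$ in a fixed copy of $L^\infty[0,1]$ and applying Theorem~\ref{thm:continuous_comparison}\eqref{item:subequivalence}; the paper instead builds the exact trace constraint $\tau(q_0)=f(x)$ into its set-valued map and extracts $p$ directly from the selection. Your reduction of condition (2) to ``$p(x)\le r_x$ pointwise'' is correct, and your lower semicontinuity argument (soft cut-offs $h_{\delta/2,\delta}(a(x))$, the spectral projection $\chi_{[1/2,1]}$ of $h q_0 h$, then a small trace repair inside $r_x$) is essentially the paper's own argument, which uses $a(x)^{1/k}$ and \cite[Lemma XIV.2.2]{Takesaki:III} in place of your cut-offs; that part works.

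The genuine gap is your verification that $\{\Psi(x)\}_{x\in X}$ is equi-$\LC^0$. You claim that two $\|\cdot\|_{2,\tau}$-close projections $q_0,q_1\in\Psi(x)$ are joined by \emph{short} increasing paths through $q_0\vee q_1$. This is false: the join of two close projections need not be close to either of them. Embed $M_2$ unitally and trace-compatibly into $\cM$, take $a=1$ and $f\equiv 0$ (so $\Psi(x)=\mathrm{Proj}(\cM)$), and let $q_0,q_1$ be rank-one projections of $M_2$ at angle $\theta$. Then $\|q_0-q_1\|_{2,\tau}=\sin\theta\to 0$, yet $q_0\wedge q_1=0$ and $q_0\vee q_1=1$, so $\|q_0\vee q_1-q_0\|_{2,\tau}^2=\tau(1)-\tau(q_0)=\tfrac12$ for every $\theta>0$; the increasing path through the join has diameter bounded below independently of $\theta$. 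Since equi-$\LC^0$ is a hypothesis of Theorem~\ref{thm:michael_selection}, the selection step collapses as written. Note also that you cannot literally argue ``as in Claim~\ref{claim:equiLC0}'' either, because elements of $\Psi(x)$ may have different traces and hence need not be unitarily conjugate. The step is repairable: given $q_0,q_1\in\Psi(x)$ with $\|q_0-q_1\|_{2,\tau}<\delta$ and, say, $\tau(q_0)\le\tau(q_1)$, first shrink $q_1$ along an increasing path to a subprojection $q_1'\le q_1$ with $\tau(q_1')=\tau(q_0)$ (this path stays in $\Psi(x)$ and has diameter $(\tau(q_1)-\tau(q_0))^{1/2}\le\delta^{1/2}$, since $|\tau(q_1)-\tau(q_0)|\le\|q_0-q_1\|_{2,\tau}$); then $q_0$ and $q_1'$ are equal-trace projections in the II$_1$ factor $r_x\cM r_x$, and \cite[Lemma XIV.2.1]{Takesaki:III} together with Lemma~\ref{lemma:ulc}---exactly as in the paper's proof of Proposition~\ref{prop:projection}---produces a short conjugation path from $q_0$ to $q_1'$ inside $\Psi(x)$, with constants independent of $x$. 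With that replacement your proof goes through.
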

\begin{proof}
If $a= 0$ let $p = 0$, otherwise we can assume that $\| a \| = 1$. Given $x \in X$, let $s_x \in \cM$ be the spectral projection $\chi_{(0,1]}(a(x))$. Consider the map $\Phi \colon X \to \cP(\text{Proj}(\cM))$ defined as
\[
\Phi(x) \coloneqq \{ q \in \text{Proj}(\cM) : \tau(q) = f(x) \text { and  } q \in s_x \cM s_x \}.
\]
The codomain of $\Phi$ is the power set of $\text{Proj}(\cM)$, and the latter is equipped with the complete metric induced by $\| \cdot \|_{2, \tau}$.

Clearly $\Phi(x)$ is closed for all $x$. 
\begin{claim}
The function $\Phi$ is lower semicontinuous.
\end{claim}
\begin{proof}
Let $U \subseteq \text{Proj}(\cM)$ be an open set and let $x \in X$ be such that $\Phi(x) \cap U \not = \emptyset$. Let $q \in \Phi(x) \cap U$ and find $1 > \e > 0$ so that $B_\e(q) \subseteq U$. Fix $0 < \delta <  \e/17$.  Since
\[
\| a(x)^{1/k} - s_x \|_{2, \tau} \to 0 \text{ for } k \to \infty, 
\]
we can fix $k \in \bbN$ large enough to have $\| a(x)^{1/k} - s_x \|_{2, \tau} < \delta^2$. 

 Using  continuity of $f$ and $a$, fix an open neighborhood $Z$ of $x$ in $X$ such that
\begin{equation} \label{eq:continuity}
| f(x) - f(y) | < \delta \text{ and } \| a^{1/k}(x) - a^{1/k}(y) \|_{2, \tau} < \delta^2, \quad y \in Z.
\end{equation}
Since $a$ and $q$ are contractions and $\| bc \|_{2, \tau} \le \| b \| \| c\|_{2,\tau}$ for all $b, c \in \cM$, for every $y \in Z$ we have that
\begin{align} \label{eq:continuity1}
\|a(y)^{1/k} q a(y)^{1/k}-a(x)^{1/k} q a(x)^{1/k}\|_{2,\tau}&\leq 2\|a(y)^{1/k}-a(x)^{1/k}\|_{2,\tau} \\ & <2\delta^2. \nonumber
\end{align}
We moreover have $q s_x=q$, which implies
\begin{align} \label{eq:continuity2}
\|a(x)^{1/k} q a(x)^{1/k} - q\|_{2,\tau} &= \|a(x)^{1/k} q a(x)^{1/k} - s_xqs_x\|_{2,\tau} \\ & \leq 2\|s_x-a(x)^{1/k}\|_{2,\tau}\nonumber \\ &<2\delta^2. \nonumber
\end{align}

Fix $y \in Z$. The inequalities in \eqref{eq:continuity1} and \eqref{eq:continuity2}  imply
\[
\| q - a(y)^{1/k} q a(y)^{1/k} \|_{2,\tau} < 4 \delta^2,
\]
thus by \cite[Lemma  XIV.2.2]{Takesaki:III} there is a projection $r \in s_y\cM s_y$ such that
\begin{equation} \label{eq:qr}
\| q - r \|_{2,\tau} < 2\sqrt{12} \delta < 8 \delta.
\end{equation}

The latter inequality, combined with \eqref{eq:continuity} and with $\tau(q) = f(x)$, entails $| \tau(r)  - f(y) | < 9 \delta$. If $\tau (r) > f(y)$, since $ s_y \cM s_y$ is a II$_1$ factor, there are projections $r_0, r_1 \in s_y \cM s_y$ such that $r = r_0 + r_1$ where $\tau(r_0) = f(y)$ and $\tau(r_1) < 9 \delta$, which gives
\begin{equation} \label{eq:qr0}
\| q - r_0 \|_{2, \tau} < \| q - r \|_{2, \tau} + 9 \delta \stackrel{\eqref{eq:qr}}{<} 17 \delta < \e,
\end{equation}
and thus $r_0 \in \Phi(y) \cap B_\e(q) \subseteq \Phi(y) \cap U$ as desired.

If, on the other hand, $\tau(r) < f(y)$ then, as $f(y) \le d_{\tau}(a(y)) = \tau(s_y)$, there exists a projection $r_2 \in s_y\cM s_y$ orthogonal to $r$ such that $\tau(r_2) = f(y) - \tau(r)$. Therefore, a computation like the one in \eqref{eq:qr0} grants $r + r_2 \in \Phi(y) \cap B_\e(q)$.
\end{proof}

\begin{claim} 
The space $\Phi(x)$ is 0-connected for every $x \in X$ and $\{ \Phi(x) \}_{x \in X}$ is equi-$\LC^0$.
\end{claim}
\begin{proof}
Given $x \in X$, any two elements $p,q \in \Phi(x)$ are equivalent projections of $s_x \cM s_x$, and hence there is $u \in \cU(s_x \cM s_x)$ such that $u p u = q$. Any continuous path of unitaries in $s_x \cM s_x$ joining $u$ to 1 automatically yields a continuous path of projections in $s_x \cM s_x$ joining $p$ to $q$, showing that $\Phi(x)$ is 0-connected.

To see that $\{ \Phi(x) \}_{x \in X}$ is equi-$\LC^0$, arguing like at the beginning of Claim \ref{claim:equiLC0}, it is sufficient to prove that for every $\e >0$ there is $\delta > 0$ such that if $p,q \in \Phi(x)$ for some $x \in X$ with $\| p - q \|_{2, \tau} < \delta$, then there exists a path in $\Phi(x)$ joining $p$ to $q$ of diameter smaller than $\e$. It is crucial that $\delta$ depends \emph{neither} on $x$ \emph{nor} on $p$ and $q$.

Given $\e > 0$, let $\delta < \frac{\e}{4 \sqrt{2}}$, fix $x \in X$, and let $p,q \in \Phi(x)$ be such that $\| p - q \|_{2, \tau} < \delta$. By \cite[Lemma XIV.2.1]{Takesaki:III} there is $u \in \cU(s_x\cM s_x)$ such that $upu^* = q$ and $\| 1 - u \|_{2, \tau} < \e /2$. By Lemma \ref{lemma:ulc} there is thus a continuous path $(u_t)_{t \in [0,1]}$ in $\cU(s_x\cM s_x)$ from $s_x$ to $u$  of diameter smaller than $\e /2$. It then follows that $(u_t p u_t^*)_{t \in [0,1]}$ is a continuous path from $p$ to $q$ in $\Phi(x)$ of diameter smaller than $\e$. 
\end{proof}

The claims permit us to apply Theorem \ref{thm:continuous_comparison} to $\Phi$, hence there exists a continuous function $p \colon X \to \text{Proj}(\cM)$ such that $\tau(p(x)) = f(x)$ and $p(x) \in s_x \cM s_x$, for all $x \in X$. Suppose now that $b \in C_\sigma(X, \cM)_+$ is such that $a \vartriangleleft b$. This means in particular that $a(x) \vartriangleleft b(x)$, which in turn implies $s_x \vartriangleleft b(x)$ for all $x \in X$. Since $p(x)  \vartriangleleft s_x$, it follows that $p(x) \vartriangleleft b(x)$ for all $x \in X$, and thus $p \vartriangleleft b$, since multiplication is defined pointwise on $C_\sigma(X, \cM)$.
\end{proof}

In the remaining part of this section, given a positive contraction $a \in A$ and $\e > 0$, we use the notation $(a - \e)_+$ to abbreviate $f_\e(a)$, where $f \colon [0,1] \to [0,1]$ is defined as $f_\e(t) \coloneqq \max \{ 0, t -\e \}$. 

\begin{theorem} \label{thm:rr0}
Let $(\cM, \tau)$ be a II$_1$ factor and let $X$ be a compact Hausdorff space with covering dimension at most 1.  For every $a \in  C_\sigma(X, \cM)_+$ and $\e > 0$ there is a projection $p \in  C_\sigma(X, \cM)$ such that $(a- \e)_+ \precsim p \precsim a$.
\end{theorem}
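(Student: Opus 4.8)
The plan is to reduce both Cuntz comparisons to elementary consequences of a single \emph{spectral squeeze}: using Michael's selection theorem I would produce a continuous projection $p \in C_\sigma(X,\cM)$ lying fibrewise between two spectral projections of $a$,
\[
\chi_{[\e,1]}(a(x)) \le p(x) \le \chi_{(\gamma,1]}(a(x)), \qquad x \in X,
\]
for a fixed $\gamma$ with $0 < \gamma < \e$ (after normalising $\|a\|=1$ and discarding the case $a=0$). Granting such a $p$, both halves of the conclusion are immediate. The lower bound gives $(a-\e)_+ \vartriangleleft p$, since $p(x)$ dominates the support projection $\chi_{(\e,1]}(a(x))$ of $(a(x)-\e)_+$; hence $(a-\e)_+ \le \|(a-\e)_+\|\,p$ and so $(a-\e)_+ \precsim p$. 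The upper bound gives $p \le \chi_{(\gamma,1]}(a) \le h_{\gamma/2,\gamma}(a)$, and since $h_{\gamma/2,\gamma}(a)$ is a unit for $(a-\gamma)_+$ we obtain $p \precsim h_{\gamma/2,\gamma}(a) \precsim (a-\gamma/2)_+ \precsim a$ (this is exactly the mechanism behind Proposition \ref{prop:projection}\,(2)). Thus the whole theorem rests on producing the squeezed projection.

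To run the selection theorem I first fix the trace. I would set $L(x) := \tau(\chi_{[\e,1]}(a(x)))$ and $U(x) := d_\tau((a(x)-\gamma)_+) = \tau(\chi_{(\gamma,1]}(a(x)))$; the former is upper semicontinuous (counting spectrum in a closed set), the latter lower semicontinuous (an increasing limit of the continuous maps $x \mapsto \tau((a(x)-\gamma)_+^{1/n})$), and $L \le U$. The Kat\v{e}tov--Tong insertion theorem then yields a continuous $f \colon X \to [0,1]$ with $L \le f \le U$. I next consider the multimap
\[
\Phi(x) := \{\, q \in \text{Proj}(\cM) : \chi_{[\e,1]}(a(x)) \le q \le \chi_{(\gamma,1]}(a(x)),\ \tau(q) = f(x) \,\},
\]
whose values are non-empty and closed by the choice of $f$, and form a single unitary orbit inside the corner $\chi_{(\gamma,\e)}(a(x))\cM\chi_{(\gamma,\e)}(a(x))$ (translated by the fixed projection $\chi_{[\e,1]}(a(x))$). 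Consequently each $\Phi(x)$ is $0$-connected and $\{\Phi(x)\}_x$ is equi-$\LC^0$, by the same path-of-unitaries argument as in Claim \ref{claim:equiLC0} together with Lemma \ref{lemma:ulc}. Since $\dim(X)\le 1$, Theorem \ref{thm:michael_selection} (with $n=0$) provides the desired continuous selection $p$.

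The main obstacle is verifying that $\Phi$ is lower semicontinuous, and specifically the \emph{lower} constraint $q \ge \chi_{[\e,1]}(a(x))$, which forces $q$ to contain spectral mass of $a$ that can jump as $x$ varies. The upper, sub-support constraint is dealt with exactly as in the lower-semicontinuity Claim of Proposition \ref{prop:projection}, through the approximations $a(x)^{1/k} \to \chi_{(0,1]}(a(x))$ and Takesaki's perturbation lemmas \cite[Lemma XIV.2.1--2.2]{Takesaki:III}. For the lower constraint the reason for taking the \emph{closed} cut $[\e,1]$ (rather than $(\e,1]$) is precisely to make the forced containment semicontinuity-compatible: if $a(x)$ carries spectral mass at $\e$, then for nearby $y$ that mass lies in $(\gamma,\e)$, i.e. inside the free corner permitted by the upper bound, so a perturbed $q' \in \Phi(y)$ can \emph{absorb} it and remain $\|\cdot\|_{2,\tau}$-close to $q$, whereas the open cut would suddenly \emph{force} such mass in and destroy lower semicontinuity. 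One then adjusts the remaining free part of $q$ and its trace to meet $\tau(q') = f(y)$ as in Proposition \ref{prop:projection}. Carrying out this simultaneous perturbation of the forced and free parts, with the bracket conventions arranged so that boundary mass is never abruptly forced into $q$, is the technical heart; everything else is routine bookkeeping with the Cuntz calculus.
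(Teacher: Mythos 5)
Your route is genuinely different from the paper's: you aim to produce a single projection $p$ squeezed fibrewise between two spectral projections of $a$, namely $\chi_{[\e,1]}(a(x)) \le p(x) \le \chi_{(\gamma,1]}(a(x))$, and then read off both Cuntz relations. The surrounding reductions are sound: the Cuntz calculus deriving $(a-\e)_+ \precsim p \precsim a$ from the squeeze, the Kat\v{e}tov--Tong insertion of a continuous $f$ between the upper semicontinuous $L$ and the lower semicontinuous $U$, the identification of each fibre $\Phi(x)$ as a translated unitary orbit inside the corner $\chi_{(\gamma,\e)}(a(x))\cM\chi_{(\gamma,\e)}(a(x))$, and the resulting $0$-connectedness and equi-$\LC^0$ via Lemma \ref{lemma:ulc} and \cite[Lemma XIV.2.1]{Takesaki:III}. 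But there is a genuine gap exactly where you place ``the technical heart'': lower semicontinuity of $\Phi$ is never proved, only narrated. This is not a cosmetic omission, because it is the one point where your approach is strictly harder than anything in the paper: your $\Phi$ carries a \emph{two-sided} constraint, and the forced lower bound $q' \ge \chi_{[\e,1]}(a(y))$ is of a different nature from the sub-support constraint handled in Proposition \ref{prop:projection}. Concretely, to find $q' \in \Phi(y)$ close to a given $q \in \Phi(x)$ you must bound the trace of the spectral mass of $a(x)$ lying just below $\e$ that crosses into $[\e,1]$ for $a(y)$: any such mass is forcibly contained in $q'$, while $q$ may be orthogonal to it, and then $\| q' - q \|_{2,\tau}^2$ is at least that trace. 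The closed-versus-open bracket observation alone does not control this. What is needed is a quantitative argument, e.g.\ that spectral mass crossing a gap of width $\eta$ under a perturbation of size $\delta = \| a(y) - a(x) \|_{2,\tau}$ has trace $O(\delta^2/\eta^2)$, combined with the fact that $\tau(\chi_{[\e - \eta, \e)}(a(x))) \to 0$ as $\eta \to 0$ for the fixed operator $a(x)$ (and the symmetric estimates at the cut $\gamma$), followed by an actual assembly of $q'$ from the new forced part, the absorbed part, a transported copy of the free part of $q$, and a trace correction. None of this is supplied, and ``routine bookkeeping'' is not an accurate description of it.

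For comparison, the paper's proof is engineered precisely to avoid two-sided constraints. Proposition \ref{prop:projection} imposes only the sub-support condition $p(x) \in s_x \cM s_x$, whose lower semicontinuity is exactly what Takesaki's perturbation lemmas deliver, and the missing lower Cuntz bound is recovered by complementation: apply Proposition \ref{prop:projection} a second time, to $1 - h_{0,\delta}(b)$ with $b = (a-\e)_+$ and trace function $1 - f$, producing a projection $q$ with $q \vartriangleleft 1 - h_{\delta, 2\delta}(b)$, hence $h_{\delta,2\delta}(b) \vartriangleleft q^\perp$ and $(b - 2\delta)_+ \precsim q^\perp$; then Theorem \ref{thm:continuous_comparison} (equal pointwise traces imply unitary conjugacy) identifies the projections $q^\perp$ obtained for different $\delta$ with each other and with $p$, and letting $\delta \to 0$ gives $(a-\e)_+ \precsim p$. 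So you have two options: either supply the missing lower-semicontinuity argument sketched above (your statement, if proved, is in fact stronger than the theorem, since it localizes $p$ between spectral projections of $a$), or reroute the second half of your proof through this complementation trick, which only ever requires the one-sided selection you already know how to justify.
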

\begin{proof}
This argument is analogous to one used to prove \cite[Theorem 3.7]{Evington:trace}. We briefly sketch it for the reader's convenience.

We use the abbreviation $\cN \coloneqq  C_\sigma(X, \cM)$. If $a = 0$, take $p =0$. Otherwise, we can assume $\| a \| = 1$. Using the function $h_{\e_0,\e_1}$ defined in the paragraph preceding  Proposition~\ref{prop:projection} define $f \colon X \to [0,1]$ by $f(x) \coloneqq \tau(h_{\e/2, \e}(a(x)))$ for $x \in X$. Then
\[
d_\tau((a(x) - \e)_+) \le f(x) \le d_\tau((a(x) - \e/2)_+), \quad x \in X.
\]

Since $(a- \e/2)_+ \vartriangleleft h_{0, \e/2}(a)$, by Proposition \ref{prop:projection} there is a projection $p \in \cN$ such that $\tau(p(x)) = f(x)$ for all $x \in X$ and $p \vartriangleleft h_{0,\e}(a)$. Since $h_{0,\e}(a) \in \overline{a \cN a}$ this implies $p \in \overline{a \cN a}$ and thus $p \precsim a$.

Set $b \coloneqq (a - \e)_+$ and fix $\delta > 0$. Then $\tau(h_{0, \delta}(b(x))) \le d_\tau(b(x))$ for all $x \in X$, and therefore
\[
d_\tau(1- h_{0, \delta}(b(x))) \ge \tau ( 1 - h_{0,\delta}(b(x))) \ge 1 - f(x), \quad x \in X.
\]
By Proposition \ref{prop:projection} there is a projection $q \in \cN$ such that
\begin{equation} \label{eq:trace_value}
\tau(q(x)) = 1 - f(x), \quad x \in X,
\end{equation}
and
\[
q \vartriangleleft 1 - h_{0, \delta}(b) \vartriangleleft 1 - h_{\delta, 2\delta}(b).
\]
It follows that $h_{\delta, 2\delta}(b) \vartriangleleft q^\perp$, which in turn gives $(b - 2\delta)_+ \precsim h_{\delta, 2 \delta}(b) \precsim q^\perp$.

By Theorem \ref{thm:continuous_comparison} and \eqref{eq:trace_value}, repeating this argument for different values of~$\delta$ will always return projections that are unitarily conjugate, and thus Cuntz equivalent, to $q$. This shows that $(b - 2 \delta) \precsim q^\perp$ for all $\delta > 0$, which in turn implies $b \precsim q^\perp$, that is $(a- \e)_+ \precsim q^\perp$.

Since $\tau(p(x)) = \tau(q^\perp(x)) = f(x)$ for all $x \in X$, Theorem \ref{thm:continuous_comparison} ensures that $p$ and $q^\perp$ are unitarily conjugate in $\cN$, and therefore $(a-\e)+ \precsim p \precsim a$.
\end{proof}

\begin{proof}[Proof of Theorem \ref{thm:trace_problem}] Fix a compact Hausdorff space $X$ with covering dimension at most 1 and a II$_1$ factor $\cM$. 
	We need to prove that the trace problem has positive solution for $C_\sigma(X, \cM)$. By Proposition \ref{prop:comparison}, it suffices to prove that $C_\sigma(X, \cM)$ has strict comparison relative to $X$.

By \cite[Lemma 2.14]{Evington:trace}, it is sufficient to verify strict comparison relative to $X$ for $a,b \in M_n(C_\sigma(X, \cM))_+$, for $n \in \bbN$. Suppose then that there is $\gamma > 0$ such that  $d_\tau(a(x)) \le (1 -\gamma) d_\tau(b(x))$ for all $x \in X$. Note that $M_n(C_\sigma(X, \cM)) \cong C_\sigma(X, M_n(\cM))$, hence $M_n(C_\sigma(X, \cM))$ is again a trivial $W^\ast$-bundle with base space of dimension at most 1. Because of this we can assume, without loss of generality, that $a, b \in C_\sigma(X, \cM)$.

Fix $\e > 0$. By \cite[Proposition 3.3]{NgRobert:commutators}, applied to $a$ as an element in $C_\sigma(X, \cM)$, there is $\delta > 0$ such that
\[
d_\tau((a(x) - \e)_+) \le  (1 - \gamma/2)d_\tau((b(x) - \delta)_+), \quad x \in X.
\]
By Theorem \ref{thm:rr0} there are projections $p,q \in C_\sigma(X, \cM)$ such that
\[
(a - 2\e)_+ \precsim p \precsim (a - \e)_+ \text{ and } (b- \delta)_+ \precsim q \precsim b.
\]
We therefore have
\[
\tau(p(x)) \le d_\tau((a(x) - \e)_+) \le (1 - \gamma/2) d_\tau((b(x) - \delta)_+) \le \tau(q(x)), \quad x \in X.
\]
Theorem \ref{thm:continuous_comparison} then implies that $p \precsim q$, and thus $(a - 2\e)_+ \precsim p \precsim q \precsim b$. As $\e$ is arbitrary, we conclude that $a \precsim b$.
\end{proof}

\section{Concluding remarks} \label{S.5}
The application of continuous selection principles to the study of factors can be traced back to \cite{PopaTakesaki:contractible}, where Michael's results from \cite{michael1959convex} are used to prove the existence of continuous cross-sections for quotients of groups of unitaries and to prove that the automorphism group of the hyperfinite II$_1$ factor is contractible (the proof of the first statement contains a gap that was recently fixed in \cite{Ozawa:contract} using \cite{Michael:selection1}). Michael's selection theorems appear to be tailor-made for the analysis of \wstar-bundles, and we briefly discuss here some possible future directions of research related to the results presented in this note.

The first natural question is whether Theorem \ref{thm:continuous_comparison} could be proved with no assumption on the II$_1$ factor $(\cM, \tau)$ and on the compact Hausdorff space~$X$. The case where~$X$ has infinite covering dimension would need to be approached with different tools than those used in this paper, as the selection principle in Theorem \ref{thm:michael_selection} is intrinsically limited to finite-dimensional spaces. If, on the other hand, $X$ has covering dimension smaller that $n+1$, the first step towards adapting our arguments would be showing that the unitary groups of the corners of $\cM$ are $n$-connected. After the submission of the first version of this manuscript, this has been proved to be true (in the separable case) in the recent preprint \cite{Jekel:contract}, where Jekel shows that the unitary group of every separably representable II$_1$ factor is contractible in the strong operator topology. We point out, however, that Jekel's methods do not seem to be directly applicable to prove that the family $\{\Phi(x) \}_{x \in X}$ in the proof of  Theorem \ref{thm:continuous_comparison} is equi-LC$^n$.

Another direction worth exploring further is whether Theorem \ref{thm:continuous_comparison}  could be used to solve the trace problem in case of trivial $W^\ast$-bundles whose base space has finite covering dimension and in case  of $\cM \cong \cN \bar \otimes L(\bbF_\infty)$ for some finite factor $\cN$. If $\Phi \colon X \to \cP(\text{Proj}(\cM))$ is as in the proof of Proposition~\ref{prop:projection} and $\cM$ is as above, then Theorem \ref{thm:continuous_comparison} suffices to guarantee that $\Phi(x)$ is $n$-connected for every $n \in \bbN$, but it is not obvious whether the family is $\{ \Phi(x) \}_{x \in X}$ is equi-LC$^n$.

A different problem is whether 
 Michael's continuous selection principles could be used to obtain analogues of Theorems \ref{thm:continuous_comparison} and \ref{thm:trace_problem} for \emph{non-trivial} $W^\ast$-bundles (see \cite{Ozawa:dixmier} or \cite[\S 3.6]{CCEGSTW} for the definition of~$W^\ast$-bundle). More precisely, given a~$W^\ast$-bundle $\cN$ with base space $X$, to each point $x \in X$ corresponds a \emph{fiber}~$\cM_x$, isomorphic to $\pi_{\rho_{\delta_x}}(\cN)''$ (in fact to  $\pi_{\rho_{\delta_x}}(\cN)$, see \cite[Theorem~11]{Ozawa:dixmier}), where~$\rho_{\delta_x}$ is the trace on $\cN$ corresponding to the Dirac measure of $x \in X$ as described in \eqref{eq:measure}. It is possible to define a topology on the disjoint union $B \coloneqq \bigsqcup_{x \in X} \cM_x$ so that $\cN$ is isomorphic to the set of continuous \emph{sections} $f \colon X \to B$ so that $f(x) \in \cM_x$ for all $x \in X$ (this is done in detail in \cite[\S 3]{EvingtonPennig} and in \cite[\S 3.6]{Evington:phd}, which adapt to von Neumann algebras the theory of Banach bundles developed in \cite{FellDoran}).

 This might appear as a setup suitable for Michael's continuous selection theorem, setting in particular $Y = B$ in the statement of Theorem \ref{thm:michael_selection}. It is however crucial for Theorem \ref{thm:michael_selection} that $Y$ is a complete metric space. One can then assume that $X$ is metrizable, in which case $B$ can be proved to be metrizable itself, but even in this case it is not clear how to find a complete metric on~$B$ compatible with the topology.

Nevertheless, we point out that one Michael's selection principle from \cite{Michael:selection1} has been successfully adapted to bundles of Banach spaces in \cite{Lazar}, suggesting that a similar adaptation of Theorem \ref{thm:michael_selection}, and of the results in \cite{Michael:selection2}, to bundles of Banach spaces and of von Neumann algebras is plausible.

\bibliographystyle{plain}
\bibliography{bibmichael}

\end{document}